\newtheorem{theorem}{Theorem}[section]
\newtheorem{definition}[theorem]{Definition}
\newtheorem{proposition}[theorem]{Proposition}
\newtheorem{remark}[theorem]{Remark}
\newtheorem{example}[theorem]{Example}
\newcommand{\setdef}[2]{\{#1 : #2\}}
\newcommand{\Kc}{\mathcal{K}}
\newcommand{\Tc}{\mathcal{T}}
\newcommand{\Uc}{\mathcal{U}}
\newcommand{\real}{\mathbb{R}}
\newcommand{\integerspos}{\mathbb{Z}_{> 0}}
\newcommand{\pder}[2]{\frac{\partial #1}{\partial #2}}
\newcommand{\C}{\mathcal{C}}
\newcommand{\Cc}{\mathcal{C}}
\newcommand{\Dc}{\mathcal{D}}
\newcommand{\Sc}{\mathcal{S}}
\newcommand{\Pc}{\mathcal{P}}
\newcommand{\Vc}{\mathcal{V}}
\DeclareSymbolFont{bbold}{U}{bbold}{m}{n}
\DeclareSymbolFontAlphabet{\mathbbold}{bbold}
\newcommand{\norm}[1]{\left\lVert#1\right\rVert}
\newcommand{\cbf}{\operatorname{cbf}}
\newcommand{\Int}{\operatorname{Int}}
\newcommand{\clos}{\operatorname{clos}}
\newcommand\oprocendsymbol{\hbox{$\bullet$}}
\newcommand\oprocend{\relax\ifmmode\else\unskip\hfill\fi\oprocendsymbol}
\newcommand*{\QEDA}{\hfill\ensuremath{\blacksquare}}%
\newcommand\xqed[1]{%
  \leavevmode\unskip\penalty9999 \hbox{}\nobreak\hfill
  \quad\hbox{#1}}
\newcommand\demo{\xqed{$\bullet$}}
\newcommand\problemfinal{\xqed{$\triangle$}}
\newcounter{countitems}
\newcounter{nextitemizecount}
\newcommand{\setupcountitems}{%
  \stepcounter{nextitemizecount}%
  \setcounter{countitems}{0}%
  \preto\item{\stepcounter{countitems}}%
}
\newcommand{\computecountitems}{%
  \edef\@currentlabel{\number\c@countitems}%
  \label{countitems@\number\numexpr\value{nextitemizecount}-1\relax}%
}
\newcommand{\nextitemizecount}{%
  \getrefnumber{countitems@\number\c@nextitemizecount}%
}
\newcommand{\previtemizecount}{%
  \getrefnumber{countitems@\number\numexpr\value{nextitemizecount}-1\relax}%
}
\computecountitems\ifnumcomp{\previtemizecount}{>}{4}{\end{multicols}}{}}
\newcommand{\longthmtitle}[1]{\mbox{}\emph{(#1):}}
\newcommand{\comment}[1]{} 
\newcolumntype{P}[1]{>{\centering\arraybackslash}p{#1}}
\begin{document}
\title{\LARGE \bf Converse Theorems for Certificates of Safety and
  Stability}

\author{Pol Mestres \qquad Jorge Cort{\'e}s \thanks{P. Mestres and
    J. Cort\'es are with the Department of Mechanical and Aerospace
    Engineering, University of California, San Diego,
    \{pomestre,cortes\}@ucsd.edu}%
}

\maketitle

\begin{abstract}
  Motivated by the key role of control barrier functions (CBFs) in
  assessing safety and enabling the synthesis of safe controllers in
  nonlinear control systems, this paper presents a suite of converse
  results on CBFs.  Given any safe set, we first
  identify a set of general sufficient conditions which guarantee the
  existence of a CBF. Our technical analysis also enables us to define
  an extended notion of CBF which is always guaranteed to exist if the
  set is safe.  We next turn our attention to the problem of joint
  safety and stability, and give conditions under which the notions of
  control Lyapunov-barrier function (CLBF) and compatible control
  Lyapunov function (CLF) and CBF pair are guaranteed to exist.
  Finally, we identify conditions under which a CLBF and a compatible
  CLF-CBF pair can be constructed from a non-compatible CLF-CBF pair.
  Throughout the paper, we intersperse different examples and
  counterexamples to motivate our results and position them within the
  state of the art.
\end{abstract}


\section{Introduction}\label{sec:introduction}

%
%
Safety-critical control is a fundamental problem in modern
cyber-physical systems, with a rich set of applications ranging from
autonomous driving and power systems to policy design for mitigation
of epidemic spreading.  Safety refers to the ability to ensure by
design that the evolution of the dynamics stays within a desired set
of safe states. A relatively recent but promising tool to deal with
such safety specifications are control barrier functions (CBFs). An
advantage of CBFs is that they do not require computing the system’s
reachable set, which can be computationally expensive. However,
constructing CBFs is often challenging. In fact, the problem of
whether a general safe set admits a CBF has received limited
attention.  Often, CBFs are combined with control Lyapunov functions
(CLFs) to yield controllers with safety and stability guarantees.  The
problem of when such CBFs and CLFs coexist and can be combined has
also not been thoroughly studied.  Addressing these gaps is the focus
of this work.

\subsubsection*{Literature Review}

Control barrier functions (CBFs)~\cite{PW-FA:07,ADA-SC-ME-GN-KS-PT:19}
aim to render a given set forward invariant.
%
Despite their popularity and success in a variety of different
applications~\cite{SCH-XX-ADA:15,ADA-XX-JWG-PT:17,TV-SM-RH-QH:21,PM-JC:22-acc},
various fundamental questions about their properties still remain
open.  A key question is whether CBFs are guaranteed to exist for any
safe set. Several
works~\cite{SR:18,SP-AR:05,RW-CS:13,JL:22,MM-RGS:23,ADA-SC-ME-GN-KS-PT:19}
have studied such converse results, particularly for systems without
inputs, for which CBFs are referred to as \textit{barrier functions}.
The main result in~\cite{SP-AR:05} applies to continuously
differentiable vector fields without inputs and requires the existence
of a continuously differentiable function that is strictly increasing
along the solutions of the vector field.  However, such assumptions
are restrictive in many cases of interest, such as systems that admit
limit cycles. The main result in~\cite{RW-CS:13} applies to smooth
vector fields without inputs and requires the existence of a
\textit{Meyer function}~\cite
{SP-AR:05}, which is also a restrictive assumption.  The
work~\cite{SR:18} shows that, when the safe set is bounded, if the
system is robustly safe and has no inputs, then there exists a barrier
function candidate satisfying the barrier function condition strictly
at the boundary of the safe set. 

The paper~\cite{MM-MG:22} extends these converse results (also in the context 
of robust safety) to a more general class of systems and safe sets.
Finally, the recent
paper~\cite{MM-RGS:23} studies the converse safety problem extensively
in the case of systems without inputs, introducing the notion of
time-varying barrier functions. Such functions are not necessarily
smooth, and their existence is necessary and sufficient for safety.
The paper studies also the regularity properties of such time-varying
barrier functions in a variety of scenarios. Despite the
importance of this work, one must note that most of the CBF literature
is concerned with time-invariant (C)BFs, and the converse problem of
determining what is the most general set of conditions that guarantee
that a safe set admits a time-invariant (C)BF remains open.  For
systems with inputs,~\cite{ADA-SC-ME-GN-KS-PT:19} 
(which is a survey paper discussing the main technical results regarding 
CBFs, as well as some of its application domains)
shows that if the
safe set is compact and has a regular boundary, any continuously
differentiable function with the safe set as its zero superlevel set
is a~CBF.
The proof of this result relies on Nagumo's Theorem~\cite{MN:42} 
and a specific construction of the 
class $\Kc_{\infty}$ function associated with the CBF leveraging the compactness of 
the safe set.

%
%
A related problem, which has also received increasing attention, is
that of safe stabilization.  A popular approach to design safe
stabilizing controllers is to combine CBFs with
CLFs~\cite{PO-JC:19-cdc,ADA-XX-JWG-PT:17,KG-DP:21,PM-JC:23-csl}. However,
in order to have provable safety and stability guarantees, these
control design methods must ensure that the CBF and CLF are compatible
(i.e., that there exists a control input satisfying simultaneously the
associated inequalities at every point in the safe set).
Alternatively, the paper~\cite{MZR-BJ:16} proposes to unify a CLF and
a CBF into a unique function, called control Lyapunov-barrier function
(CLBF), and applies Sontag's universal formula to derive a smooth safe
stabilizing controller. However,~\cite{PB-CMK:20} points out some
limitations for the existence of such a CLBF. In a similar
vein,~\cite{MK-DK:22,MDK:23} generalize Brockett's necessary condition for
continuous state-dependent feedback stabilization~\cite{RWB:83a} in
the context of feedback stabilization and safety, which in turn
provides limitations on the existence of a CLBF or a compatible
CLF-CBF pair.
%
%
However, the converse problem, i.e., whether a CLBF or a compatible
CLF-CBF pair exists provided that safe stabilization is possible,
remains largely unexplored. A recent exception
is~\cite{YM-YL-MF-JL:22}, where converse results for safe
stabilizability are given. However, the Lyapunov-like function derived
in the main converse result~\cite[Theorem~8]{YM-YL-MF-JL:22} becomes
unbounded at the boundary of the safe set,
as a byproduct of the proof technique utilized therein.
Instead, both CLBFs and
compatible CLF-CBF pairs are bounded at the boundary of the safe
set. This distinction is relevant because the safe stabilizing
controllers based on CLBFs or compatible CLF-CBF pairs would not be
well-defined if these functions become unbounded at the boundary of
the safe set.
%
%

\subsubsection*{Statement of Contributions}
The contributions of this paper consist of converse theorems on the
existence of CBFs for the study of safety and safe stabilization of
control systems. Specifically,
\begin{enumerate}
\item We provide an example that shows that for unbounded safe sets,
  there might be candidate CBFs (i.e., functions whose zero superlevel
  set is the safe set) which are not CBFs, and candidate CBFs which
  are.  This is in contrast to the case of bounded safe sets, where
  all candidate CBFs are CBFs.  We also provide an example that shows
  that the existence of a CBF does not guarantee the existence of a
  locally Lipschitz safe feedback controller, even if the CBF
  condition is satisfied strictly at every point;
\item Given a safe set, we provide a set of general conditions on the
  dynamics and the safe set under which a CBF is guaranteed to exist.
  These conditions include safe sets for which there exists a safe
  controller such that trajectories of the closed-loop system do not
  get arbitrarily close to the boundary of the safe set, or polynomial
  systems with polynomial safe set and safe feedback.  We also define
  an extended notion of CBF, termed \textit{extended control barrier
    function} (eCBF), which relies on a generalization
  of the notion of extended class $\Kc_{\infty}$ function and show
  that they are always guaranteed to exist for any given dynamics and
  safe set;
\item Drawing on existing results in the literature, we provide a
  result that shows that if the unsafe set has a bounded connected
  component, there does not exist a CLBF or a strictly compatible
  CLF-CBF pair, and if the safe set is unbounded, there does not exist
  a CLBF. However, for a compact safe set, we show that if there
  exists a controller satisfying the CBF condition strictly and
  another controller that is stabilizing, the safe set admits a CLBF
  and a strictly compatible CLF-CBF pair.  We also show that if the
  origin is safely stabilizing, under the same conditions that we can
  guarantee the existence of a CBF, we can also guarantee the
  existence of a compatible CLF-CBF pair;
\item Finally, we show via a counterexample that the existence of a
  CLF and a CBF does not imply the existence of a strictly compatible
  CLF-CBF pair. On the positive side, we find sufficient conditions
  under which the existence of a CLF and a CBF implies the existence
  of a strictly compatible CLF-CBF pair and a compatible CLF-eCBF
  pair.
\end{enumerate}
For convenience, Table~\ref{tab:summary-results} summarizes the
  main results of the paper.

\begin{table}
  \centering
  \begin{tabular}{{ |c | c | c |}}
    \hline
    \textbf{Result} & \textbf{Assumptions} & \textbf{Statement} \\
    \hline
    Thm~\ref{thm:converse-cbf} & SC1 + safety & $\exists$ CBF \\
    \hline 
    Thm~\ref{thm:converse-ecbf} & safety & $\exists$ eCBF \\
    \hline
    Thm~\ref{thm:converse-safe-stab}~\ref{it:converse-safe-stab-clbf} & 
    \begin{tabular}{@{}c@{}} SC2 + safety \\ + stability \end{tabular} &
    \begin{tabular}{@{}c@{}} $\exists$ CLBF \\ $\exists$ strictly compatible CLF-CBF pair \end{tabular} \\
    \hline
    Thm~\ref{thm:converse-safe-stab}~\ref{it:converse-safe-stab-clf-cbf} &
    \begin{tabular}{@{}c@{}} SC1 \\ + safe stabilization \end{tabular} & $\exists$ compatible CLF-CBF pair \\
    \hline
    Thm~\ref{thm:converse-safe-stab}~\ref{it:converse-safe-stab-clf-ecbf} & 
    safe stabilization &
    $\exists$ compatible CLF-eCBF pair \\
    \hline
    Prop~\ref{prop:compatible-clf-ecbf} & 
    \begin{tabular}{@{}c@{}} SC3 + safety \\ + stability \end{tabular} & $\exists$ compatible CLF-eCBF pair \\
    \hline
  \end{tabular}
  \caption{ Summary of the main results in this
      paper. SC1 stands for the different sufficient conditions
      outlined in Theorem~\ref{thm:converse-cbf}, SC2 stands for the
      sufficient conditions in
      Theorem~\ref{thm:converse-safe-stab}~\ref{it:converse-safe-stab-clbf},
      and SC3 for the sufficient conditions in
      Proposition~\ref{prop:compatible-clf-ecbf}. }
  \label{tab:summary-results}
\end{table}

\section{Preliminaries}\label{sec:preliminaries}

In this section we introduce some notation and preliminaries on CLFs,
CBFs, and CLBFs.

\subsection{Notation}
We denote by $\mathbb{Z}_{>0}$, $\real$, $\real_{\geq0}$, and
$\real_{<0}$ the set of positive integers, real, nonnegative real
numbers, and negative real numbers, resp. We write $\Int(\Sc)$,
$\partial\Sc$, $\clos(\Sc)$ for the interior, boundary, and closure of
the set $\Sc$, resp. 
The symbol $\mathbf{0}_n$ stands for the $n$-the dimensional zero vector.
Given $x\in\real^{n}$, $\norm{x}$ denotes its
Euclidean norm.  Given two functions $b_1:\real\to\real$,
$b_2:\real\to\real$, we say that $b_1(x)=O(b_2(x))$ near some real
number $a$ if there exist positive numbers $\delta$ and $M$ such that
$|b_1(x)|\leq M b_2(x)$ for all $x$ with $0<\norm{x-a}<\delta$.

Given $f:\real^{n}\to\real^{n}$, $g:\real^{n}\to\real^{n\times m}$ and
a smooth function $W:\real^{n}\to\real$, the notation
$L_{f}W:\real^{n}\to\real$ (resp. $L_gW:\real^n\to\real^m$) denotes
the Lie derivative of $W$ with respect to $f$ (resp. $g$), that is
$L_{f}W=\nabla W^T f$ (resp. $\nabla W^Tg$). We denote by
$\Cc^1(\real^n)$ and $\Cc^\infty(\real^n)$ the set of continuously
differentiable and infinitely continuously differentiable functions in
$\real^{n}$, resp. Given a multi-index
$\rho=(\rho_1,\rho_2,\hdots,\rho_n)$ and a smooth scalar function $f$,
$D^{\rho}f(x) = \frac{\partial^{\rho_1}}{\partial x_1}
\frac{\partial^{\rho_2}}{\partial x_2} \hdots
\frac{\partial^{\rho_n}}{\partial x_n} f(x)$. We denote by $|\rho|$
the sum of the components of $\rho$.  Given a differentiable function
$h:\Dc\to\real$ and $\lambda\in\real$, $\lambda$ is a regular value of
$h$ if $\pder{h}{x}(x)\neq0$ for all
$x\in\setdef{y\in\Dc}{h(y)=\lambda}$. A function $V:\real^n\to\real$
is positive definite if $V(0)=0$ and $V(x)>0$ for all $x\neq0$. $V$ is
proper in a set $\Gamma$ if the set $\setdef{x\in\Gamma}{V(x)\leq c}$
is compact for any $c\geq0$. $V$ is proper if it is proper in its
domain.  A continuous function
$\alpha:\real_{\geq0} \to \real_{\geq0}$ is of class $\Kc$ if it is
strictly increasing and $\alpha(0)=0$ and of class $\Kc_{\infty}$ if
additionally $\lim\limits_{r\to\infty}\alpha(r)=\infty$.  If
$\alpha:\real\to\real$ is strictly increasing, $\alpha(0)=0$, and
$\lim\limits_{r\to\pm\infty}\alpha(r)=\pm\infty$, we say that $\alpha$
is of extended class $\Kc_{\infty}$.

A closed set $\Cc$ is forward invariant under the dynamical system
$\dot{x}=f(x)$ if any trajectory with initial condition in $\Cc$
at time $t=0$ remains in $\Cc$ for all times $t\geq0$.  A closed set $\Cc$ is safe
for the control system $\dot{x}=f(x,u)$, with
$f:\real^n\times\real^m\to\real^n$ locally Lipschitz, if there exists
a locally Lipschitz control $u_{\text{sf}}:\real^n\to\real^m$ such that $\Cc$ is
forward invariant for $\dot{x}=f(x,u_{\text{sf}}(x))$.
Furthermore, we call $u_{\text{sf}}$ a safe controller in~$\Cc$.
A point $p$ is safely stabilizable on a
set $\Cc$ if there exists a locally Lipschitz control 
$u_{\text{sf}}:\real^{n}\to\real^{m}$ such
that $\Cc$ is forward invariant for $\dot{x}=f(x,u_{\text{sf}}(x))$ and $p$ is
asymptotically stable with region of attraction containing $\Cc$. Such
a controller is a \textit{safe stabilizing} controller in~$\Cc$.

\subsection{Control Lyapunov and Barrier Functions}

In this section we introduce the notions of control Lyapunov and
barrier functions. Throughout the paper we consider the nonlinear
control system
\begin{align}\label{eq:control-sys}
  \dot{x}=f(x,u),
\end{align}
where $f:\real^{n}\times\real^{m}\to\real^{n}$ is locally Lipschitz,
with $x\in\real^n$ the state and $u\in\real^{m}$ the input.

\begin{definition}\longthmtitle{Control Lyapunov
    Function~\cite{EDS:98,RAF-PVK:96a}}\label{def:clf}
  Given a set $\Gamma\subseteq\real^{n}$, with
  $\mathbf{0}_n\in\Gamma$, a continuously differentiable function
  $V:\Gamma\to\real$ is a \textbf{CLF} on $\Gamma$ for the
  system~\eqref{eq:control-sys} if it is proper in $\Gamma$, positive
  definite, and there exists a positive definite function
  $W:\Gamma\to\real$ such that, for each
  $x\in\Gamma\backslash \{0\}$, there exists a control $u\in\real^{m}$
  satisfying
  \begin{align}\label{eq:clf-ineq}
    \nabla V(x)^T f(x,u) \leq -W(x).
  \end{align}
\end{definition}
\medskip
A Lipschitz controller $k:\real^{n}\to\real^{m}$ such that $u=k(x)$
satisfies~\eqref{eq:clf-ineq} for all $x\in\Gamma\backslash\{0\}$ makes
the origin of the closed-loop system asymptotically stable. Hence,
CLFs provide a way to guarantee asymptotic stability.

Next we recall the notion of control barrier function
(CBF)~\cite{ADA-SC-ME-GN-KS-PT:19}.  Consider the set $\Cc$ defined as
the zero-superlevel set of a continuously differentiable
function $h:\real^{n}\to\real$ as follows
\begin{subequations}
  \begin{align}
    \Cc&=\setdef{x\in\real^n}{h(x)\geq0},
    \\
    \partial\Cc&=\setdef{x\in\real^n}{h(x)=0},
    \\
    \text{Int}(\Cc)&=\setdef{x\in\real^n}{h(x)>0}.
  \end{align}
  \label{eq:safe-set}
\end{subequations}
Further assume that $\text{Int}(\Cc) \neq \emptyset$.
A continuously differentiable 
function $h:\real^n\to\real$ satisfying~\eqref{eq:safe-set} is referred to 
as a \textit{candidate CBF} of $\Cc$.

\begin{definition}\longthmtitle{Control Barrier
    Function}\label{def:cbf}
  Let $h:\real^{n}\to\real$ be a continuously differentiable function
  satisfying~\eqref{eq:safe-set}. The
  function $h$ is a \textbf{CBF}
  of $\Cc$ for the
  system~\eqref{eq:control-sys} if there exists an extended class
  $\mathcal{K}_\infty$ function $\alpha$ such that, for all
  $x\in \Cc$, there exists a control $u\in\real^{m}$ satisfying
  \begin{align}\label{eq:cbf-ineq}
      \nabla h(x)^T f(x,u)+\alpha(h(x))\geq0.
  \end{align}
\end{definition}
\smallskip

In this paper we stick with Definition~\ref{def:cbf}, which is widely
used in the CBF literature, as opposed to the notion of time-varying
barrier function in~\cite[Definition 15]{MM-RGS:23}.  The following
result establishes that the existence of a CBF of~$\Cc$ certifies its
safety.

\begin{theorem}\longthmtitle{CBFs
    certify
    safety~\cite[Theorem~2]{ADA-SC-ME-GN-KS-PT:19}}\label{thm:cbfs-certify-safety}
  Let $\Cc\subset\real^{n}$, $h$ be a CBF of $\Cc$ for the
  system~\eqref{eq:control-sys}, and $0$ be a regular value
  of~$h$. Any Lipschitz controller $u_{\text{sf}}:\real^{n}\to\real^{m}$ that
  satisfies
  \begin{align}
    u_{\text{sf}}(x)\in K_{\cbf}(x)\!=\!\setdef{u\in \real^m}{\nabla h(x)^T
    f(x,u)\geq-\alpha(h(x))}
  \end{align}
  for all $x\in\Cc$ renders the set $\Cc$ forward invariant.
\end{theorem}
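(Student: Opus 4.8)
The plan is to reduce forward invariance of $\Cc$ to a first--order tangency condition on $\partial\Cc$ and then invoke Nagumo's invariance theorem. Since $f$ is locally Lipschitz and $k$ is Lipschitz, the closed--loop vector field $F(x):=f(x,k(x))$ is locally Lipschitz, so $\dot x=F(x)$ has a unique maximal solution from each initial condition and ``forward invariance'' is well posed. Nagumo's theorem asserts that the closed set $\Cc$ is forward invariant for $\dot x=F(x)$ if and only if $F(x)$ belongs to the Bouligand tangent cone $T_\Cc(x)$ for every $x\in\Cc$, so it suffices to verify this containment.

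The first step is to identify $T_\Cc(x)$. At interior points $x\in\Int(\Cc)$ one has $T_\Cc(x)=\real^n$, so nothing must be checked. At a boundary point $x\in\partial\Cc$ we have $h(x)=0$, and since $0$ is a regular value of the $\Cc^1$ function $h$ we have $\nabla h(x)\neq 0$; a first--order expansion of $h$ around $x$, together with the fact that near $x$ the set $\{h\ge 0\}$ lies on one side of the $\Cc^1$ hypersurface $\{h=0\}$, yields $T_\Cc(x)=\setdef{v\in\real^n}{\nabla h(x)^{\transpose}v\ge 0}$. This is the one place the regular--value hypothesis is essential: without it the tangent cone at a boundary point could be strictly smaller and the sign condition below would no longer certify invariance.

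The second step is to verify the containment using the CBF inequality. Fix $x\in\partial\Cc$; since $k(x)\in K_{\cbf}(x)$,
\[
\nabla h(x)^{\transpose}f(x,k(x))\ \ge\ -\alpha(h(x))\ =\ -\alpha(0)\ =\ 0,
\]
so $F(x)\in T_\Cc(x)$. Note that only $\alpha(0)=0$ is used here, not the full extended class $\Kc_\infty$ property of $\alpha$. Combined with the trivial interior case, the tangency condition holds at every $x\in\Cc$, and Nagumo's theorem delivers forward invariance of $\Cc$.

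The main obstacle is precisely this tangency/invariance step. The tempting shortcut---setting $y(t):=h(x(t))$, noting $\dot y(t)\ge -\alpha(y(t))$ while $x(t)\in\Cc$, and applying the comparison lemma against $\dot z=-\alpha(z)$, $z(0)=h(x(0))\ge 0$ (whose solutions remain nonnegative because $\alpha(z)<0$ for $z<0$)---does not by itself close the argument when the CBF inequality is posited only on $\Cc$, since at a putative boundary crossing $\dot y$ may vanish and first--order information is inconclusive; ruling this out genuinely requires the Lipschitz continuity of $F$, which is exactly what Nagumo's theorem exploits. (Alternatively, strengthening the hypothesis to ``$h$ is a CBF on an open neighborhood of $\Cc$'' makes the comparison--lemma route go through directly, at the cost of a less general statement.)
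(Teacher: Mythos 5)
Your proof is correct, and it follows essentially the same route as the source: the paper does not reprove this result but imports it verbatim from \cite[Theorem~2]{ADA-SC-ME-GN-KS-PT:19}, whose argument is exactly the Nagumo tangent-cone computation you give (regularity of $0$ identifies $T_{\Cc}(x)$ with the half-space $\{v:\nabla h(x)^{\transpose}v\ge 0\}$ on $\partial\Cc$, and the CBF inequality with $\alpha(0)=0$ places the Lipschitz closed-loop field in that cone). Your closing remark correctly diagnoses why the naive comparison-lemma shortcut is insufficient when the inequality is only posited on $\Cc$.
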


The following result states that for compact safe sets the converse of Theorem~\ref{thm:cbfs-certify-safety}
also holds.

\begin{theorem}\longthmtitle{Converse CBF result for compact safe
    sets~\cite[Theorem
    3]{ADA-SC-ME-GN-KS-PT:19}}\label{thm:compact-converse-cbf}
  Let $\Cc$ be a compact set defined as in~\eqref{eq:safe-set} and
  assume that $0$ is a regular value of $h$. If $\Cc$ is safe for
  system~\eqref{eq:control-sys}, then $h\vert_{\Cc}:\Cc\to\real$ is a
  CBF of $\Cc$.
\end{theorem}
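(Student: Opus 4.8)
The plan is to use the safety of $\Cc$ to exhibit a single locally Lipschitz feedback that, for a suitably chosen extended class $\Kc_{\infty}$ function $\alpha$, witnesses the CBF inequality~\eqref{eq:cbf-ineq} at every point of $\Cc$. Since $\Cc$ is safe, fix a locally Lipschitz $k:\real^{n}\to\real^{m}$ rendering $\Cc$ forward invariant for $\dot x = f(x,k(x))$; note $x\mapsto f(x,k(x))$ is locally Lipschitz. Because $0$ is a regular value of $h$, we have $\nabla h(x)\neq 0$ on $\partial\Cc$, so $\partial\Cc$ is a $\Cc^{1}$ hypersurface and the (Bouligand) tangent cone to $\Cc$ at any $x\in\partial\Cc$ equals the half-space $\setdef{v\in\real^{n}}{\nabla h(x)^{\transpose}v\geq0}$. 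Invoking Nagumo's theorem (the classical subtangentiality characterization of forward invariance), forward invariance of $\Cc$ then forces
\begin{align*}
  \nabla h(x)^{\transpose}f(x,k(x))\geq 0\qquad\text{for all }x\in\partial\Cc .
\end{align*}

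Write $b(x):=\nabla h(x)^{\transpose}f(x,k(x))$; this is continuous on the compact set $\Cc$, hence bounded, and by the display above $b\geq 0$ on $\partial\Cc=\setdef{x}{h(x)=0}$. Put $\psi(x):=\max\{-b(x),0\}$, a continuous nonnegative function on $\Cc$ that vanishes on $\partial\Cc$, and $\bar h:=\max_{\Cc}h<\infty$. For $s\in[0,\bar h]$ set $\omega(s):=\max\setdef{\psi(x)}{x\in\Cc,\ 0\leq h(x)\leq s}$, so that $\psi(x)\leq\omega(h(x))$ for every $x\in\Cc$. The claim reduces to finding an extended class $\Kc_{\infty}$ function $\alpha$ with $\alpha\geq\omega$ on $[0,\bar h]$: indeed, taking $u=k(x)$ then gives $\nabla h(x)^{\transpose}f(x,k(x))+\alpha(h(x))=b(x)+\alpha(h(x))\geq b(x)+\psi(x)\geq 0$ for every $x\in\Cc$ (splitting on the sign of $b(x)$ and using $\alpha\geq0$ on $\realnonneg$ together with $\alpha(h(x))\geq\omega(h(x))\geq\psi(x)$), so $h\vert_{\Cc}$ is a CBF of $\Cc$ in the sense of Definition~\ref{def:cbf}.

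It remains to build $\alpha$. The function $\omega$ is non-decreasing, bounded, satisfies $\omega(0)=0$, and $\lim_{s\to0^{+}}\omega(s)=0$ by a standard compactness argument (any sequence $x_n\in\Cc$ realizing $\psi(x_n)=\omega(s_n)$ with $h(x_n)\leq s_n\to0^{+}$ accumulates at a point of $\partial\Cc$, where $\psi$ vanishes, while $\omega(s_n)$ converges to the monotone limit). One can then majorize $\omega$ on $[0,\bar h]$ by a continuous, strictly increasing function vanishing at $0$ — for instance $s\mapsto s+\tfrac1s\int_{0}^{2s}\tilde\omega(t)\,dt$, where $\tilde\omega$ extends $\omega$ by the constant $\omega(\bar h)$ beyond $\bar h$ — and extend it to all of $\real$ (e.g.\ linearly at both ends) so that it is strictly increasing, vanishes at the origin, and tends to $\pm\infty$ as $s\to\pm\infty$; this supplies the required $\alpha$. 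The main obstacle is precisely this last step: converting the inequality against the a priori irregular (merely monotone, possibly discontinuous) function $\omega$ into an inequality against a bona fide extended class $\Kc_{\infty}$ function. Here compactness of $\Cc$ is what confines $\omega$ to a bounded $h$-interval and keeps it bounded, while the Nagumo boundary inequality is exactly what forces $\omega(s)\to0$ as $s\to0^{+}$, making the obstacle compatible with the normalization $\alpha(0)=0$; dropping either hypothesis, an appropriate majorizing $\alpha$ need not exist.
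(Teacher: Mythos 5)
The paper does not prove this theorem itself (it is quoted from \cite{ADA-SC-ME-GN-KS-PT:19}), but your argument is correct and is essentially the standard one: use Nagumo/forward invariance to get $\nabla h^{\transpose} f(x,k(x))\geq 0$ on $\partial\Cc$, define the worst-case deficit as a monotone function of the $h$-level, use compactness to show it is finite and tends to $0$ at level $0$, and majorize it by an extended class $\Kc_\infty$ function. This is the same construction the paper itself deploys (in a two-argument form) in the proof of Theorem~\ref{thm:converse-ecbf}, so no further comparison is needed.
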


Note that the result not only states the existence of a CBF but also
that the function defining the set $\Cc$ is itself a CBF.  Next we
comment on the assumptions of Theorems~\ref{thm:cbfs-certify-safety}
and~\ref{thm:compact-converse-cbf} and establish different connections
with the literature.

\begin{remark}\longthmtitle{Existence of Lipschitz safe
    controllers}\label{rem:existence-locally-Lipschitz-safe-controllers}
  {\rm
  As pointed out in Theorem~\ref{thm:cbfs-certify-safety}, any locally
  Lipschitz safe controller satisfying that CBF condition renders the
  set safe.  However, in general, such locally Lipschitz controller
  might not exist even if a CBF is available.  In fact,~\cite[Example
  III.5]{MA-NA-JC:25-tac} shows that the so-called
  \textit{minimum-norm} controller (obtained at every $x\in\real^n$ as
  the controller with smallest norm that
  satisfies~\eqref{eq:cbf-ineq}) can be unbounded.  Since the
  minimum-norm controller is unbounded, this example shows that even
  if a CBF exists, there might not exist a locally Lipschitz
  controller satisfying~\eqref{eq:cbf-ineq}.  However,~\cite[Lemma
  III.2]{MA-NA-JC:25-tac} shows that if~\eqref{eq:control-sys} is
  control-affine, $\Cc$ is compact and the CBF
  condition~\eqref{eq:cbf-ineq} holds strictly at the boundary, then
  the minimum-norm controller is locally Lipschitz.  
  Alternatively, if $\Cc$ is compact and there exists an open set $\Dc$ containing $\Cc$ for which 
  the CBF condition is feasible, then a Lipschitz safe controller also exists~\cite[Theorem 5]{RK-ADA-SC:21}.
  We next
  complement this discussion by presenting an example inspired
  by~\cite{EDS-HJS:80} that shows that if the system is not
  control-affine, even if $\Cc$ is compact and the CBF
  condition~\eqref{eq:cbf-ineq} holds strictly at the boundary, there
  might not exist a continuous safe controller.  Let
  $h:\real^2\to\real$ be defined as $h(x,y) = -(x^2 + y^2) + 10$ and
  take $\Cc$ as in~\eqref{eq:safe-set}.  Consider the system
  \begin{subequations}\label{eq:existence-cbf-no-existence-continuous-controller} 
    \begin{align}
      \dot{x}
      &=  x \big( (u-1)^2 - (x-1) \big) \big( (u+1)^2 + (x-2)
        \big),
      \\
      \dot{y} &= y \big( (u-1)^2 - (x-1) \big) \big( (u+1)^2 + (x-2)
      \big).
    \end{align}
  \end{subequations}
  Let us show that $h$ is a CBF of $\Cc$ for
  system~\eqref{eq:existence-cbf-no-existence-continuous-controller}.
  Take $(x,y)\in\Cc$, and note that~\eqref{eq:cbf-ineq} is equivalent
  to
  \begin{align}\label{eq:cbf-ineq-counterexample-nonsmooth-feedback}
    \notag
    -2(x^2+y^2) \big( (u - 1)^2 - (x-1) \big) \big( (u + 1)^2 + & (x - 2) \big) \! \\
    & \geq \! -\alpha(h(x,y)).
  \end{align}
  Note that the set of points $(x,u)\!\in\!\real^2$ that satisfy
  \begin{align}\label{eq:auxx}
    \big( (u-1)^2 \! - \! (x-1)
    \big) \big( (u+1)^2 \! + \! (x-2) \big) \! \leq \! 0 ,
  \end{align}
  consists of two disjoint connected sets, one formed by the points to
  the right of the parabola $x=1+(u-1)^2$, and the other formed by the
  set of points to the left of the parabola $x=2-(u+1)^2$.  
  This set is illustrated in Figure~\ref{fig:discontinuous}.
  \begin{figure}[htb]
    \centering
    \includegraphics[width=0.45\textwidth]{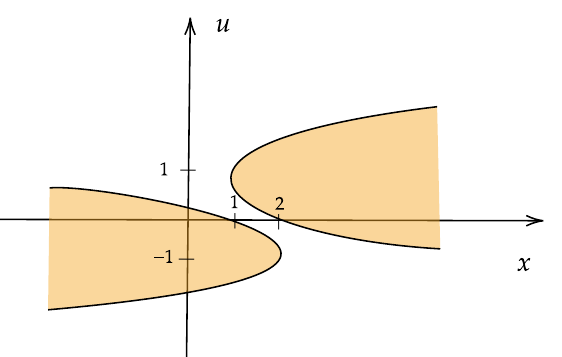}
    \caption{Illustration of the set of points $(x,u)$
      satisfying~\eqref{eq:auxx}. For every value of $x$, there are
      controls (colored in orange) that
      satisfy~\eqref{eq:auxx}.}\label{fig:discontinuous}
  \end{figure}
  Note that the projection of these two sets onto the $x$ axis covers
  the whole axis. Hence, $h$ is a CBF.  Now let 
  $\tilde{u}_0:\Cc\to\real$ be a
  controller such that $u=\tilde{u}_0(x,y)$ satisfies~\eqref{eq:cbf-ineq-counterexample-nonsmooth-feedback} 
  at all points in $\Cc$, and let
  $\tilde{u}:[-\sqrt{10},\sqrt{10}]\to\real$ be defined as
  $\tilde{u}(x) = \tilde{u}_0(x,\sqrt{10-x^2})$.  Note that since the two
  connected sets are disjoint, $\tilde{u}$ can not be continuous.  This
  implies that $\tilde{u}_0$ can also not be continuous.  We note also that in
  this example, $\Cc$ is compact and for all
  $x\in\real$, there exists $u\in\real$
  satisfying the inequality
  $\big( (u-1)^2 \! - \! (x-1) \big) \big( (u+1)^2 \! + \! (x-2)
  \big)\leq 0$ strictly, which means that for all $(x,y)\in\partial\Cc$,
  there exists $u\in\real$ satisfying~\eqref{eq:cbf-ineq-counterexample-nonsmooth-feedback} 
  strictly, and there also exists a neighborhood of $\Cc$ for which~\eqref{eq:cbf-ineq-counterexample-nonsmooth-feedback}
  is feasible.
  Hence, the only hypothesis
  of~\cite[Lemma~III.2]{MA-NA-JC:25-tac} and 
  ~\cite[Theorem 5]{RK-ADA-SC:21}
  that fails is that the system is not control-affine.  \demo}
\end{remark}

\begin{remark}\longthmtitle{Minimal Control Barrier Functions}
  {\rm
  The work~\cite{RK-ADA-SC:21} shows that the regularity assumption
  can be dropped in both Theorems~\ref{thm:cbfs-certify-safety}
  and~\ref{thm:compact-converse-cbf}.  This work identifies the
  minimal set of conditions that guarantee safety and defines the set
  of functions satisfying these conditions as \textit{minimal
    (control) barrier functions} (M(C)BFs).
  %
  Even though here we focus on CBFs, we also point out various
  connections of our results to M(C)BFs.  \demo}
\end{remark}

When dealing with both stability and safety requirements under the
dynamics~\eqref{eq:control-sys}, it is important to note that an input
$u$ might satisfy~\eqref{eq:clf-ineq} but not~\eqref{eq:cbf-ineq}, or
vice versa. The following definition captures when the CLF and the CBF
are compatible.

\begin{definition}\longthmtitle{Compatibility of CLF-CBF
    pair~\cite[Definition 3]{PM-JC:23-csl}}\label{def:compat-clf-cbf}
  Let $\Gamma \subseteq \real^{n}$ be open, $\Cc \subset \Gamma$
  closed, $V$ a CLF on $\Gamma$ and $h$ a CBF of~$\Cc$. Then, $V$ and
  $h$ are (strictly) \textbf{compatible} at $x \in \Cc$ if there
  exists $u\in\real^{m}$ satisfying~\eqref{eq:clf-ineq}
  and~\eqref{eq:cbf-ineq} (strictly) simultaneously.
\end{definition}

We refer to $V,h$ as a compatible pair in $\Cc$ if they are compatible
at every point in $\Cc$. Similarly, $V,h$ are a strictly compatible
pair in $\Cc$ if they are strictly compatible at every point in
$\Cc\backslash\{0\}$. 
For control-affine systems, the work~\cite{PO-JC:19-cdc} shows that if $V$
and $h$ are strictly compatible in $\Cc$, then there exists a smooth
safe stabilizing controller. If $V$ and $h$ are only
compatible in $\Cc$, the existence of a smooth safe stabilizing
controller is not guaranteed in general. However,~\cite{PM-JC:23-csl}
provides additional technical conditions under which the control
design considered therein is locally Lipschitz and achieves safe
stabilization. If instead the CBF and CLF inequalities are included as
hard constraints of an optimization problem, one can use the theory of
parametric optimization to obtain conditions under which the resulting
optimization-based controller satisfies desirable regularity properties without
requiring strict compatibility, cf.~\cite{PM-AA-JC:25-ejc}.

\subsection{Control Lyapunov-Barrier Functions}

Here we recall the notion of Control Lyapunov-Barrier Function (CLBF)
introduced in~\cite{MZR-BJ:16} to design safe stabilizing
controllers. Although the original definition is for control-affine
systems, here we present it for general control
systems~\eqref{eq:control-sys}.

\begin{definition}\longthmtitle{Control Lyapunov-Barrier
    Function~\cite[Definition 2]{MZR-BJ:16}}\label{def:clbf}
  A proper and lower-bounded function $\bar{V}\in\Cc^{1}(\real^n)$ is a
  Control Lyapunov-Barrier Function (\textbf{CLBF}) of
  $\real^n\backslash\Cc$ if it satisfies
  \begin{subequations}
    \begin{align}
      & \bar{V}(x)>0 \quad \forall x \in \real^n\backslash\Cc
        , \label{eq:positive} 
      \\
      &\Uc :=
        \setdef{x\in\real^n}{\bar{V}(x)\leq0}\neq\emptyset, \label{eq:nonempty-sublevel-set}  
      \\
      &
        ( \overline{\Cc\backslash\Uc} ) \cap
        ( \overline{\real^n\backslash\Cc}
        )=\emptyset, \label{eq:empty-intersection} 
      \\
      & \inf_{u\in\real^m} \nabla \bar{V}(x)^T f(x,u)<0 \quad
        \forall
        x\in \Cc\backslash\{0\}. \label{eq:clbf-negative-derivative} 
    \end{align}
    \label{eq:clbf}
  \end{subequations}
\end{definition}

In the case where $\Uc=\Cc$, the
conditions~\eqref{eq:positive},~\eqref{eq:nonempty-sublevel-set} are
reminiscent of~\eqref{eq:safe-set} (with the sign changed),
and~\eqref{eq:clbf-negative-derivative} is a more restrictive version
of the inequality in Definition~\ref{def:cbf}.  On the other hand, the
requirement that $\bar{V}$ is proper
and~\eqref{eq:clbf-negative-derivative} resemble the definition of CLF
(cf. Definition~\ref{def:clf}), although in this case we do not
require $\bar{V}$ to be positive definite.  Finally,
condition~\eqref{eq:empty-intersection} is technical and guarantees
that trajectories never enter the unsafe set even if their initial
value of $\bar{V}$ is positive (cf.~\cite[Theorem 2]{MZR-BJ:16}).
This condition is trivially satisfied in the case~$\Uc=\Cc$. Given a
CLBF,~\cite[Proposition 3]{MZR-BJ:16} provides an explicit
construction of a safe stabilizing controller in $\Cc$.
In particular, this means that if there exists a CLBF of $\real^n\backslash\Cc$, 
then $\Cc$ is safe.


\section{Problem Statement}\label{sec:problem-statement}
We consider a control system of the form~\eqref{eq:control-sys} and a
safe set $\Cc$ described by a differentiable function $h$ as
in~\eqref{eq:safe-set}. We are broadly motivated by questions about
the existence of functions certifying stability and
safety. Specifically, our goal is to answer the following questions:
\begin{itemize}
\item[(P1)] If $\Cc$ is safe for the system, does it always admit a
  CBF? This problem corresponds to the converse of
  Theorem~\ref{thm:cbfs-certify-safety}, to which
  Theorem~\ref{thm:compact-converse-cbf} provides an answer in case
  $\Cc$ is compact. Here we intend to establish a more general result;

\item[(P2)] Under what conditions can the existence of a CLBF or a
  (strictly) compatible CLF-CBF pair be guaranteed? These problems are
  motivated by the fact that in either case feedback controllers that
  achieve safe stabilization can be designed under appropriate
  technical conditions, as described in
  Section~\ref{sec:preliminaries}.

\item[(P3)] Does the existence of a (not necessarily compatible)
  CLF-CBF pair imply the existence of a (strictly) compatible CLF-CBF
  pair? This question shares its motivation with (P2) and the ease
  afforded by identifying a CLF and a CBF independently of each other.

\end{itemize}

We address these problems in the remainder of the paper,
providing sufficient conditions under which each of them can be
solved.

\section{Converse Results for Safety}\label{sec:converse-sf}

In this section, we address problem~(P1). Note that
Theorem~\ref{thm:compact-converse-cbf} already provides a partial
answer for the case when $\Cc$ is compact. The treatment of this
section establishes more general results.  We start with an example
showing that, given an arbitrary safe set $\Cc$, not every candidate
CBF of $\Cc$ is a CBF of $\Cc$.

\begin{example}\longthmtitle{Choice of candidate CBF matters for
    unbounded safe sets}\label{ex:wrong-candidate}
  {\rm 
Consider the function $h:\real^{2}\to\real$ given by $h(x,y)=x$ and
the set $\Cc$ defined as in~\eqref{eq:safe-set}. Consider the system
  \begin{align*}
    \dot{x}&=xy+1,
    \\
    \dot{y}&=-y+u.
  \end{align*}
  Since $\frac{d}{dt}h(x,y)=1$ when $x=0$ for any choice of $u$, by Nagumo's
  Theorem~\cite{MN:42}, $\Cc$ is safe. However, $h$ is not a CBF. To
  show this, assume that it is. Therefore, there exists an extended class
  $\Kc_{\infty}$ function $\alpha$ satisfying
  \begin{align*}
    \nabla h(x,y)^T
    \begin{pmatrix}
      xy
      \\
      -y+u
    \end{pmatrix}
    = xy+1 \geq -\alpha(x).
  \end{align*}
  Note that, for any fixed $x>0$, $\alpha(x)$ is constant, but by
  taking $y$ sufficiently negative, $xy$ can be arbitrarily negative,
  and the inequality $xy+1 \geq -\alpha(x)$ will not be satisfied.
  Since this argument holds for any extended class $\Kc_\infty$ function
  $\alpha$, $h$ is not a CBF. 
  Similarly, by assuming that $\alpha$ is a minimal function~\cite[Definition 1]{RK-ADA-SC:21}
  also shows that $h$ is not a \textit{minimal control barrier function}~\cite[Definition 3]{RK-ADA-SC:21}.
  
  However, one can show that the function
  $\tilde{h}(x,y)=e^{y}x$, which also satisfies~\eqref{eq:safe-set},
  is in fact a CBF. Indeed, note that
  \begin{align*}
    \frac{d}{dt}\tilde{h}(x,y)=\nabla \tilde{h}(x,y)^T
    \begin{pmatrix}
      xy+1
      \\
      -y+u
    \end{pmatrix}
    = e^y+e^y xu .
  \end{align*}
  Taking $u=0$ for all $(x,y)\in\real^2$ makes $\tilde{h}$ satisfy~\eqref{eq:cbf-ineq}
  for any class $\Kc_{\infty}$ function~$\alpha$.  \problemfinal
  }
\end{example}

The relevance of this example is in showing that the assumption that
$\Cc$ be compact is critical for
Theorem~\ref{thm:compact-converse-cbf} (as well
as~\cite[Corollary~2]{RK-ADA-SC:21}) to hold, since this result
establishes that any candidate CBF of $\Cc$ is a CBF of $\Cc$. The
extension of this result to unbounded safe sets therefore requires
adjustments in the technical approach. Interestingly,
  we should point out that some aspects of Lyapunov theory for
  stability are also not fully understood in the case of unbounded
  attractors~\cite{BL-WY-MC:22}.

\begin{remark}\longthmtitle{Other counterexamples in the literature}
  {\rm
  We explain here the relative value and qualitative differences of
  Example~\ref{ex:wrong-candidate} with respect to other
  counterexamples in the literature.~\cite[Remark 8]{ADA-XX-JWG-PT:17}
  gives an example of a safe set for which a differentiable function
  satisfying~\eqref{eq:safe-set} is not a CBF, but does not specify
  whether there exists another function with the same properties that
  is.~\cite[Example 1]{MM-RGS:23} provides an example of a safe set
  with empty interior which does not admit a continuous barrier
  certificate~\cite{PW-FA:07} that is only a function of the
  state. However, the system considered does not have control inputs
  and the notion of barrier certificate is different from the standard
  notion of CBF considered here (for which, for instance, safe sets
  have non empty interior).  Finally, the counterexample
  in~\cite[Example 5]{MM-RGS:23} defines a safe set that is not
  expressible as the superlevel set of a differentiable function.
  \demo}
\end{remark}



\subsection{Converse Theorem for CBFs}

Example~\ref{ex:wrong-candidate} shows that, for an arbitrary safe set
$\Cc$, not every function satisfying~\eqref{eq:safe-set} is a CBF, and
in turn also raises the question of whether a CBF might even
exist. The following result states conditions under which this is the
case.

\begin{theorem}\longthmtitle{Converse CBF result for arbitrary
    sets}\label{thm:converse-cbf}
  Given a control system~\eqref{eq:control-sys}, let $\Cc$ be a set
  for which there exists a continuously differentiable function
  $h:\real^n\to\real$ satisfying~\eqref{eq:safe-set}. Suppose that
  $\Cc$ is safe and any of the following assumptions hold:
  \begin{enumerate}
  \item\label{it:minus1}
      there exists an extended class
      $\Kc_{\infty}$ function $\alpha$ and a function
      $u_*:\real^n\to\real^m$ such that, for all $r\geq0$,
      \begin{align}\label{eq:cbf-condition-with-inf}
        \inf\limits_{ \setdef{x\in\real^n}{h(x)\in[0,r]} } \nabla
        h(x)^T f(x,u_*(x)) \geq -\alpha(r); 
      \end{align}
  \item\label{it:zero} there exists a locally Lipschitz
      safe controller $u_0:\real^n\to \real^m$ and a positive
      function $\nu:\text{Int}(\Cc)\to\real_{>0}$ such that, for any
      $x_0\in\text{Int}(\Cc)$, the trajectory $x(\cdot)$ of
      $\dot{x}=f(x,u_0(x))$ with initial condition at $x_0$, satisfies
      $h(x(t)) \geq \nu(x_0)>0$ for all $t\geq0$;
    %
  \item\label{it:second} the function $f$ is continuously differentiable, there
    exists a continuously differentiable safe controller $\hat{u}:\real^{n}\to\real^{m}$,
    %
    %
    positive integers $M_{2}\in\mathbb{Z}_{>0}$, $N_{2}\in\mathbb{Z}_{>0}$,
    and positive constants
    $\{b_j\}_{j\in\{1,\hdots,M_2\}}$, and
    $\{c_{k}\}_{k\in\{1,\hdots,N_2\}}$ such that
    \begin{subequations}
      \begin{align}
        \norm{\nabla(\norm{f(x,\hat{u}(x))}^2)}^2
        &\leq\sum_{j=0}^{M_2} c_j
          \norm{f(x,\hat{u}(x))}^j,
        \\ 
        \norm{\nabla h(x)}^2
        &
          \leq \sum_{k=0}^{N_2} b_k \norm{f(x,\hat{u}(x))}^k,\label{eq:grad-boundedness-h}
      \end{align}
      \label{eq:grad-boundedness}
    \end{subequations}
    for all $x\in\Cc$;
  \item\label{it:third} the set $\Cc$ is compact.
  \end{enumerate}
  Then, there exists a CBF of $\Cc$.
\end{theorem}
\begin{proof}
  Note that~\ref{it:third} is simply
  Theorem~\ref{thm:compact-converse-cbf}.  

  \textbf{To show~\ref{it:minus1},} note that
  if~\eqref{eq:cbf-condition-with-inf} holds, then for any $x\in\real^n$,
  we have $\nabla h(x)^T f(x,u_*(x)) \geq -\alpha(h(x))$, and hence $h$ is a CBF of $\Cc$.

  \textbf{We now prove \textbf{\ref{it:zero}} and divide the proof in two steps.}  
  First,
  we construct a function that is differentiable almost everywhere,
  satisfies~\eqref{eq:safe-set} and for which the CBF
  condition~\eqref{eq:cbf-ineq} holds at all points where it is
  differentiable. Second, we smoothen this function and obtain an
  actual CBF.

  \underline{First step: construction of a \textit{CBF almost
      everywhere}.} For this step, we rely on the
    techniques in~\cite{JJC-DL-KS-CJT-SLH:21}, which studies the
    connection between Hamilton-Jacobi reachability and CBFs.  Let us
  consider the cost function $V:\real^n\times\real_{<0}\to\real$
  \begin{align*}
    V(x,t)=\min_{s\in[t,0]} h(x(s)),
  \end{align*}
  which captures the minimum value of $h$ along the trajectory
  $x(\cdot)$ that solves~\eqref{eq:control-sys}, with initial
  condition $x$, initial time $t<0$ and control $u_0$ (note that as
  opposed to~\cite{JJC-DL-KS-CJT-SLH:21}, here we omit the
  maximization over all possible controllers and simply use $u_0$).
  As explained in~\cite[Section II.D]{JJC-DL-KS-CJT-SLH:21}, by
  extending the definition of $V$ for infinite time as
  $V_{-\infty}(x):=\lim_{t\to-\infty}V(x,t)$,
  %
  %
  we obtain a time-invariant function whose zero-superlevel set is the
  largest forward invariant set of $\dot{x}=f(x,u_0(x))$ contained in
  $\Cc$.  In our case, since $u_0$ is a safe controller,
  $\Cc=\setdef{x\in\real^n}{V_{-\infty}(x)\geq0}$.
    Moreover, since $u_0$ is such that all trajectories of
    $\dot{x}=f(x,u_0(x))$ with initial condition
    $x_0\in\text{Int}(\Cc)$ satisfy $h( x(t) ) \geq \nu(x_0)$ for all
    $t\geq0$, it follows that $V(x_0,t) \geq \nu(x_0)$ for all $t<0$
    and therefore $V_{-\infty}(x_0) \geq \nu(x_0) > 0$ for all
    $x_0\in\text{Int}(\Cc)$.
  As also noted in~\cite[Section
    II.D]{JJC-DL-KS-CJT-SLH:21}, for all points in $\Cc$ where the
  gradient of $V_{-\infty}$ exists,
  \begin{align}\label{eq:cbf-vinf}
    \nabla V_{-\infty}(x)^T f(x,u_0(x))\geq-\alpha(V_{-\infty}(x)),
  \end{align}
  for any smooth extended class $\Kc_{\infty}$ function
  $\alpha$. Since
  $V_{-\infty}$ might not be differentiable at some points, it might
  not be a valid CBF. However,
  since $f$, $u_0$ and $h$ are locally Lipschitz,
  $V_{-\infty}$ is locally Lipschitz (cf.~\cite[Remark 1]{JJC-DL-KS-CJT-SLH:21}), 
  and therefore by Rademacher's Theorem $V_{-\infty}$ is differentiable
  almost everywhere~\cite[Theorem 7.11]{WR:87}.
  
  \underline{Second step: smoothing.}  The rest of the proof smoothens
  $V_{-\infty}$ to obtain a valid CBF. To do so, we
    follow a procedure closely related to that of~\cite{YL-EDS-YW:96}
    for smoothing Lyapunov functions.
  Let us start by showing
  that we can smoothen $V_{-\infty}$ at the interior of $\Cc$ and
  guarantee~\eqref{eq:cbf-vinf} for all $x\in\Int(\Cc)$ for the
  smoothened version of $V_{-\infty}$. Indeed, by~\cite[Theorem
  B.I]{YL-EDS-YW:96} 
  there exists a smooth
  $\Psi:\text{Int}(\Cc)\to\real$ such that for all $x\in\Int(\Cc)$,
  \begin{align*}
    |V_{-\infty}(x)-\Psi(x)|<\min\{\frac{1}{2}V_{-\infty}(x),1\},
    \\
    \nabla \Psi(x)^T f(x,u_0(x)) \geq - 2 \alpha(V_{-\infty}(x)).
  \end{align*}
  Since $V_{-\infty}(x)>0$ for all $x\in\Int(\Cc)$,
  then $\Psi(x)>V_{-\infty}(x)-\frac{1}{2}V_{-\infty}(x)=\frac{1}{2}V_{-\infty}(x)>0$
  for all $x\in\Int(\Cc)$.  Now extend $\Psi$ at $\partial\Cc$ so that
  $\Psi(x)=0$ for all $x\in\partial\Cc$.  It follows that $\Psi$
  defined in this way is smooth in $\Int(\Cc)$ and continuous in
  $\Cc$.  Moreover, since $\alpha$ is increasing,
  $2\alpha(V_{-\infty}(x))\leq 2\alpha(2\Psi(x))$. Hence, by defining
  $\bar{\alpha}(r)=2\alpha(2r)$, $\bar{\alpha}$ is smooth, extended
  class $\Kc_{\infty}$ and for all
  $x\in\Int(\Cc)$ it holds that
  $\nabla \Psi(x)^T f(x,u_0(x)) \geq -\bar{\alpha}(\Psi(x))$.
  Now, extend $\Psi$ in
  $\real^n\backslash\Cc$ in such a way that $\Psi$ is smooth in
  $\real^{n}\backslash\Cc$ and $\Psi(x)<0$ for all
  $x\in\real^{n}\backslash\Cc$ so that $\Psi$ is continuous in
  $\real^{n}$ and smooth in $\real^{n}\backslash\partial\Cc$.
  Let us now use $\Psi$ to construct a function $\Phi:\real^n\to\real$
  that is smooth in all of $\real^n$.  In order to do so, let us show
  that there exists an extended class $\Kc_{\infty}$ function $\beta$
  such that $\Phi :=\beta \circ \Psi$ is smooth in all of $\real^{n}$.
  %
  %
  The proof follows that of~\cite[Lemma 4.3]{YL-EDS-YW:96},
  where the main difference is that in our case $\Psi$ takes positive
  and negative values.  For $i\in\integerspos$, let $K_{i}$ be compact
  subsets of $\real^{n}$ such that
  $\partial\Cc\subset \bigcup_{i=1}^{\infty} K_{i}$. For any
  $k\in\integerspos$, let:
  \begin{align*}
    I_k^+ := \left( \frac{1}{k+2}, \frac{1}{k} \right), \quad I_k^- :=
    \left(-\frac{1}{k}, -\frac{1}{k+2} \right). 
  \end{align*}
  Pick also smooth $\C^{\infty}(\real)$ functions
  $\gamma_{k}^+:\real\to[0,1]$, $\gamma_{k}^-:\real\to[-1,0]$
  satisfying
  \begin{itemize}
  \item $\gamma_{k}^+(t)=0$ if $t\notin I_k^+$,
  \item $\gamma_{k}^+(t)>0$ if $t\in I_k^+$,
  \item $\gamma_{k}^-(t)=0$ if $t\notin I_k^-$,
  \item $\gamma_{k}^-(t)<0$ if $t\in I_k^-$.
  \end{itemize}
  Define also for any $k\in\integerspos$,
  \begin{align*}
    \mathcal{G}_k := \setdef{x\in\real^n}{x\in \bigcup_{i=1}^k K_i,
    \quad \Psi(x)\in \clos(I_k^+ \cup I_k^-)}. 
  \end{align*}
  Observe that $\mathcal{G}_{k}$ is compact
  for all $k\in\integerspos$ (because the sets $K_i$ are compact and 
  $\Psi$ is continuous)
  and hence for each $k\in\integerspos$ there exists $c_{k}\in\real$
  satisfying:
  \begin{enumerate}
  \item $c_{k}\geq1$,
  \item $c_{k}\geq |(D^\rho \Psi)(x)|$ for any multi-index
    $|\rho|\leq k$ and $x\in\mathcal{G}_{k}$,
  \item $c_{k}\geq |\gamma_k^{(i)}(t)|$ for any $i\leq k$ and any
    $t\in\real_{>0}$.
  \end{enumerate}
  Choose the sequence $\{d_{k} \}_{k\in\integerspos}$ to satisfy
  \begin{align*}
    0<d_k<\frac{1}{2^k (k+1)! c_k^k}, \quad k\in\integerspos.
  \end{align*}
  Now, define
  \begin{align*}
    \gamma(t)=\sum_{k=1}^\infty d_k (\gamma_k^+(t)+\gamma_k^-(t))+\delta(t)
  \end{align*}
  where $\delta:\real\to\real$ is a $\C^{\infty}(\real)$ function such
  that $\delta\equiv0$ on $[-\frac{1}{3},\frac{1}{3}]$, $\delta\geq1$
  on $[\frac{1}{2},\infty)$ and $\delta\leq-1$ on
  $(-\infty,-\frac{1}{2}]$.  By following an argument analogous to the
  one in the proof of~\cite[Lemma 4.3]{YL-EDS-YW:96}, we can show that
  $\gamma$ is smooth in $\real\backslash\{0\}$ and
  $\lim\limits_{t\to0} \gamma^{(i)}(t)=0$ for all $i\geq1$. Moreover,
  $\beta(t):=\int_{0}^{t}\gamma(s)ds$ is an extended class
  $\Kc_{\infty}$, smooth in $\real\backslash\{0\}$, and satisfies
  $\lim\limits_{t\to0} \beta^{(i)}(t)=0$ for all $i\geq1$.
  Again by following the proof of~\cite[Lemma 4.3]{YL-EDS-YW:96}, one
  can show that $\Phi=\beta \circ \Psi$ is smooth in $\real^n$. This
  follows by considering sequences converging to points in
  $\partial\Cc$ and showing that the derivatives of any order of
  $\Phi$ along these sequences converge to zero.

  Finally, now that we know that $\Phi$ is smooth and satisfies
  $\Int(\Cc)=\setdef{x\in\real^n}{\Phi(x)>0}$,
  $\partial\Cc=\setdef{x\in\real^n}{\Phi(x)=0}$, let us show that
  $\Phi$ is a CBF. Indeed, for all $x\in\text{Int}(\Cc)$, 
  since $\Phi=\beta \circ \Psi$, it holds that
  \begin{align*}
    \nabla \Phi(x)^T f(x,u_0(x)) \geq-\beta'(\Psi(x))\bar{\alpha}(\Psi(x))  
  \end{align*}
  Note that $\beta'(r)\bar{\alpha}(r)>0$ for
  $r>0$, $\beta'(0)\bar{\alpha}(0)=0$ and $\beta'(r)\bar{\alpha}(r)$
  is smooth for all $r\in\real$. Therefore, $\beta'(r)\bar{\alpha}(r)$
  can be upper bounded for $r\geq0$ by a smooth extended class
  $\Kc_{\infty}$ function~$\hat{\alpha}$. Finally, since $\beta$ is an
  extended class $\Kc_{\infty}$ function, it is invertible and we can
  define $\breve{\alpha}(r):=\hat{\alpha}\circ\beta^{-1}(r)$, which is
  also smooth and extended class $\Kc_\infty$, so that for all $x\in\Cc$
  it holds that 
  \begin{align*}
    \nabla \Phi(x)^T f(x,u_0(x)) \geq-\breve{\alpha}(\Phi(x)).
  \end{align*}
  Hence $\Phi$ is a CBF of $\Cc$.
  In fact $u_0(x)$ satisfies the CBF inequality~\eqref{eq:cbf-ineq}
  for all $x\in\Cc$.

  \textbf{Let us now show~\ref{it:second}.} First, let
    us show that without loss of generality, we can assume that $h$ is
    bounded in $\Cc$. Indeed, if $h$ is not bounded, consider the
    function $\tilde{h}:\real^n\to\real$ as
    $\tilde{h}(x)=h(x)e^{-h(x)}$. Note that $\tilde{h}$ is
    continuously differentiable (because $h$ is also continuously
    differentiable), bounded in $\Cc$ and, since $e^{h(x)} > 0$ for
    all $x\in\Cc$, $\Cc=\setdef{x\in\real^n}{\tilde{h}(x) \ge 0}$,
    $\partial\Cc=\setdef{x\in\real^n}{h(x)=0}$,
    $\text{Int}(\Cc)=\setdef{x\in\real^n}{h(x)>0}$.  Furthermore,
  \begin{align}\label{eq:gradient-tildeh}
    \nabla\tilde{h}(x) = \nabla h(x) e^{-h(x)} (1-h(x)),
  \end{align}
  for all $x\in\Cc$, and it satisfies an inequality analogous
  to~\eqref{eq:grad-boundedness-h}.  Indeed, let $\tilde{M}>0$ be such
  that $e^{-2h(x)} (1+h(x))^2  < \tilde{M}$ for all
  $x\in\Cc$. Then, by defining $\tilde{b}_k := \tilde{M}b_k$ for
  $k\in\{ 0, 1, \hdots, N_2 \}$, and using~\eqref{eq:gradient-tildeh}
  we have
  \begin{align*}
    \|\nabla\tilde{h}(x)\|^2 \leq \sum_{k=0}^{N_2} \tilde{b}_k
    \norm{ f(x,\hat{u}(x)) }^2, 
  \end{align*}
  for all $x\in\Cc$.  Therefore, without loss of generality, we assume
  that $h$ is bounded in $\Cc$. Let $M>0$ be such that $h(x) \leq M$
  for all $x\in\Cc$. Now, consider the function
$\bar{h}(x)=e^{-\norm{f(x,\hat{u}(x))}^2}h(x)$. Note that $\bar{h}$ is
bounded, continuously differentiable and
$\Cc=\setdef{x\in\real^n}{\bar{h}(x)\geq0}$,
$\Int(\Cc)=\setdef{x\in\real^n}{\bar{h}(x)>0}$ and
$\partial\Cc=\setdef{x\in\real^n}{\bar{h}(x)=0}$. Moreover,
  \small
  \begin{gather*}
    \nabla \bar{h}(x)^T
    f(x,\hat{u}(x))=e^{-\norm{f(x,\hat{u}(x))}^2}\nabla h(x)^T
    f(x,\hat{u}(x))
    \\ 
    -h(x)e^{-\norm{f(x,\hat{u}(x))}^2}
    \nabla(\norm{f(x,\hat{u}(x))}^2)^T f(x,\hat{u}(x)), 
  \end{gather*}
  \normalsize
  and, by using the bounds in~\eqref{eq:grad-boundedness}, the fact
  that $h$ is bounded in $\Cc$, and the Cauchy-Schwartz inequality,
  we get that for all $x\in\Cc$, the following holds:
  \begin{align*}
    & \nabla \bar{h}(x)^T f(x,\hat{u}(x))
      \geq
    \\
    & \quad -e^{-\norm{f(x,\hat{u}(x))}^2} 
      \sqrt{ \sum_{k=0}^{N_2}
      b_k \norm{f(x,\hat{u}(x))}^{k+1} }
    \\
    &\qquad -e^{-\norm{f(x,\hat{u}(x))}^2} M \sqrt{\sum_{j=0}^{M_2} c_j
      \norm{f(x,\hat{u}(x))}^{j+1}}.
  \end{align*}
  Thus,
  \begin{align*}
    \alpha(r)=-\inf_{
    \begin{subarray}{c}
      x \text{ s.t.}
      \\
      \bar{h}(x)\in [0,r]
    \end{subarray}
    } \nabla \bar{h}(x)^T f(x,\hat{u}(x))
  \end{align*}
  is finite for all $r\geq0$. This follows from the fact that even if
  $\norm{f(x,\hat{u}(x))}$ is unbounded as $\norm{x}\to\infty$, the
  exponential term in $\norm{f(x,\hat{u}(x))}$ will dominate the polynomial terms
  in $\norm{f(x,\hat{u}(x))}$.  Moreover,
  since $\hat{u}$ is safe, and $\bar{h}$ satisfies conditions~\eqref{eq:safe-set},
  any trajectory 
  $x(\cdot)$ of $\dot{x}=f(x,\hat{u}(x))$
  for which $x(t^*)\in\partial\Cc$ for some $t^*\in\real$ 
  necessarily satisfies $\nabla\bar{h}(x)^T f(x,\hat{u}(x)) \geq 0$
  by Nagumo's Theorem~\cite{MN:42}.
  This implies that $\alpha(0)\leq0$, and $\alpha$
  can be upper bounded by a class $\Kc_{\infty}$ function
  $\alpha_0$, from where we have
  $\nabla \bar{h}(x)^T f(x,\hat{u}(x)) \geq-\alpha_0(\bar{h}(x))$ for
  all $x\in\Cc$, and hence $\bar{h}$ is a CBF of~$\Cc$.
  In fact $\hat{u}(x)$ satisfies the CBF inequality~\eqref{eq:cbf-ineq}
  for all $x\in\Cc$.
\end{proof}

\begin{remark}\longthmtitle{Minimal CBFs and smoothness
    properties}\label{rem:minimal-cbfs-regularity}
  {\rm
  Even though $0$ is not a regular value of the CBF $\Phi$ constructed
  in Theorem~\ref{thm:converse-cbf}~\ref{it:zero}, $\Phi$ is a minimal
  CBF because the extended class $\Kc_{\infty}$ function
  $\breve{\alpha}$ is smooth~\cite[Corollary 1]{RK-ADA-SC:21}.
      %
      %
  Moreover, if in Theorem~\ref{thm:converse-cbf}~\ref{it:second}, we
  add the assumption that $0$ is a regular value of $h$, then $0$ is
  also a regular value of $\bar{h}$ (the CBF constructed in the proof) 
  and $\bar{h}$ is a minimal 
  CBF~\cite[Section III]{RK-ADA-SC:21}. 
  This implies that the
  CBFs constructed in Theorem~\ref{thm:converse-cbf} can be used for
  control design according to~\cite[Theorem~4]{RK-ADA-SC:21}
  and~\cite[Theorem~2]{ADA-SC-ME-GN-KS-PT:19}.
  Moreover, even if the minimal CBFs constructed in 
  Theorem~\ref{thm:converse-cbf}~\ref{it:second}
  and~\ref{it:third} are not $\Cc^{\infty}(\real^n)$, by applying the smoothing
  procedure outlined in the proof of
  Theorem~\ref{thm:converse-cbf}~\ref{it:zero} to any differentiable
  minimal CBF, we can construct another minimal CBF that is
  $\Cc^{\infty}(\real^n)$.
  \demo
  }
\end{remark}

\begin{remark}\longthmtitle{Class of systems and safe sets
    satisfying~\eqref{eq:grad-boundedness}} {\rm As
      shown in the proof of~\ref{it:second},
      condition~\eqref{eq:grad-boundedness} guarantees that $\bar{h}$
      (as defined therein) satisfies that the function
      $x\to\nabla\bar{h}(x)^T f(x,\hat{u}(x))$ is lower bounded in the
      set $\setdef{x\in\real^n}{\bar{h}(x)\in[0,r]}$, avoiding the issues
      faced in Example~\ref{ex:wrong-candidate}.
    %
    %
    Condition~\eqref{eq:grad-boundedness} is satisfied by a large
    class of systems, including polynomial systems for which there
    exists a polynomial safe feedback and safe sets $\Cc$ for which
    there exists a polynomial function $h$
    satisfying~\eqref{eq:safe-set}.  \demo }
\end{remark}

\begin{remark}\longthmtitle{Comparison with time-varying barrier
    functions}\label{rem:comparison-time-varying-bfs} 
  {\rm The result in~\cite[Theorem 2]{MM-RGS:23} guarantees the existence
    of a time-varying barrier function, cf.~\cite[Definition
    15]{MM-RGS:23}, under more general assumptions than
    Theorem~\ref{thm:converse-cbf}.  Despite the importance of this
    result, the construction of such time-varying barrier functions is
    in general complicated and requires computing an appropriately
    defined reachable set.  Moreover, such functions are in general
    not differentiable even if the dynamics are smooth~\cite[Theorem
    4]{MM-RGS:23}.  The added time dependence and the lack of control
    input and extended class $\Kc$ function make the notion of
    time-varying barrier functions substantially different from the
    notion of control barrier function considered here, which is the
    one widely employed in the safety-critical control
    literature~\cite{ADA-SC-ME-GN-KS-PT:19,ADA-XX-JWG-PT:17,SCH-XX-ADA:15}.
    It is also not apparent from the proof of~\cite[Theorem
    2]{MM-RGS:23} when the obtained barrier function is
    time-independent.  We also point out that the proof technique in
    Theorem~\ref{thm:converse-cbf} is different from that
    of~\cite[Theorem 2]{MM-RGS:23}, and as pointed out in
    Remark~\ref{rem:minimal-cbfs-regularity}, can be used to obtain
    barrier functions which are $\Cc^{\infty}(\real^n)$.}
  \demo
      %
  %
\end{remark}

  \begin{remark}\longthmtitle{Robust safety}\label{rem:comparison-robust-safety}
    {\rm Here we comment on the relationship between robust safety,
      cf.~\cite[Definition 2]{MM-MG:22}, and the assumptions in
      Theorem~\ref{thm:converse-cbf}~\ref{it:zero}, motivated by the
      fact that robust safety is also a sufficient condition for the
      existence of a certain notion of barrier function (different
      from the one adopted here because it does not utilize extended
      class $\Kc_{\infty}$ functions)~\cite[Theorems~1
      and~2]{MM-MG:22}.
      %
      %
      In particular, next we provide an example where the conditions
      in Theorem~\ref{thm:converse-cbf}~\ref{it:second} hold but
      robust safety does not hold.  Consider the scalar system
      $\dot{x} = xu$, safe set $\Cc = \setdef{x\in\real}{x \geq 0}$,
      and safe controller $k(x) = 1$ for all $x\in\real$.  Any
      trajectory of the closed-loop system with initial condition in
      $\text{Int}(\Cc)$ diverges to infinity and therefore the
      assumptions of Theorem~\ref{thm:converse-cbf}~\ref{it:second}
      hold.  However, the system is not robustly safe, since for any
      scalar function $\epsilon:\real\to\real_{>0}$, the trajectory of
      $\dot{x} = x - \epsilon(x)$ with initial condition at the origin
      enters the unsafe set (because $\epsilon(0)>0$).  \demo }
  \end{remark}
%
%

\begin{remark}\longthmtitle{Strict positivity in the interior of the
    safe set}
  {\rm 
  Definition~\ref{def:cbf} requires $h$ to be strictly positive in
  $\Int(\Cc)$. However, other definitions available in the literature
  (e.g.,~\cite[Theorem 1]{RK-ADA-SC:21}) only require
  $\Cc:=\setdef{x\in\real^n}{h(x)\geq0}$. With this definition of CBF
  it can be shown that any safe set admits a CBF. This follows by
  adapting the proof of Theorem~\ref{thm:converse-cbf}~\ref{it:zero}
  to the case where there might be points $x\in\Int(\Cc)$ for which
  $V_{-\infty}(x)=0$, i.e., finding a smooth approximation $\Psi$ of
  $V_{-\infty}(x)$ at $\setdef{x\in\real^n}{V_{-\infty}(x)\neq0}$ and
  then extending $\Psi$ smoothly at
  $\setdef{x\in\real^n}{V_{-\infty}(x)=0}$ as done in the proof of
  Theorem~\ref{thm:converse-cbf}~\ref{it:zero}.  \demo}
\end{remark}
%
%
\begin{remark}\longthmtitle{Asymptotic stability of safe
    set}\label{rem:asymptotic-stab-safe-set} {\rm As shown
    in~\cite[Proposition 2]{ADA-XX-JWG-PT:17}, if
    $u_{\text{sf}}:\real^n\to\real^m$ is a Lipschitz controller satisfying the CBF
    condition~\eqref{eq:cbf-ineq} for all $x\in\Dc$, where $\Dc$ is an
    open set containing $\Cc$, then the set $\Cc$ is asymptotically
    stable. By appropriately strengthening the conditions in
    Theorem~\ref{thm:converse-cbf}, we can also guarantee the
    existence of a CBF valid in an open set containing $\Cc$, and
    hence certifying asymptotic stability of $\Cc$.
    Indeed,
    \begin{enumerate}
    \item in
      Theorem~\ref{thm:converse-cbf}~\ref{it:minus1}, if there exists
      $r_0<0$ such that~\eqref{eq:cbf-condition-with-inf} is satisfied
      for all $r\geq r_0$, then $h$ is a CBF and~\eqref{eq:cbf-ineq}
      is feasible (by using $u=u_*(x)$) for all $x\in\real^n$ with
      $h(x)\geq r_0$;
    \item in Theorem~\ref{thm:converse-cbf}~\ref{it:zero}, under the 
      additional assumption that there exists an open set  
    $\Dc$ containing $\Cc$ for which any trajectory of
    $\dot{x}=f(x_0,u_0(x))$ with initial condition  
    in $\Dc\backslash\text{Int}(\Cc)$ either converges to $\Cc$ in
    finite time or asymptotically, 
    by following the same proof technique we can show that $\Phi$ (as
    obtained in the proof) 
    is a CBF and~\eqref{eq:cbf-ineq} is feasible (by using $u=u_0(x)$)
    for all $x\in\Dc$;    
  \item in Theorem~\ref{thm:converse-cbf}~\ref{it:second}, if there
    exists an open set $\Dc$ containing $\Cc$ for
    which~\eqref{eq:grad-boundedness}  
    holds for all $x\in\Dc$, then the same proof technique shows that
    there exists a CBF of $\Cc$ and the  
    corresponding inequality~\eqref{eq:cbf-ineq} is also feasible in
    $\Dc$;
    \item in Theorem~\ref{thm:converse-cbf}~\ref{it:third}, since
      $\Cc$ is compact,  
    if it is asymptotically stable, by~\cite[Theorem 2.9]{YL-EDS-YW:96}
    there exists a Lyapunov function with respect to $\Cc$. Therefore, 
    $h$ (which is guaranteed to be a CBF by
    Theorem~\ref{thm:compact-converse-cbf}) 
    can be extended outside of $\Cc$ using this Lyapunov function
    (smoothly, using the arguments 
    in~\cite[Proposition 4.2]{YL-EDS-YW:96} and
    Theorem~\ref{thm:converse-cbf}~\ref{it:zero}).   \demo
  \end{enumerate}
}
\end{remark}

Even though Theorem~\ref{thm:converse-cbf} extends significantly
Theorem~\ref{thm:compact-converse-cbf} regarding the class of safe
sets and systems for which a CBF exists, it is an open problem to
determine whether an even more general result holds true. 
{\color{blue} The following example is not covered by any of the cases in
Theorem~\ref{thm:converse-cbf}.}

  \begin{example}\longthmtitle{Example not covered by converse CBF
      theorem}\label{ex:example-not-covered-by-converse-CBF-theorem}
    {\rm Here we provide an example of a control system and safe set
      not covered by any of the cases in
      Theorem~\ref{thm:converse-cbf}. Let $h:\real\to\real$ be defined
      as $h(x) = e^x \sin(x)$, and let $\Cc$ be the corresponding safe
      set as defined in~\eqref{eq:safe-set}.  Note that $h$ is
      continuously differentiable.  Define the dynamics by letting
      $f(x,u) = h(x)$ (since systems without control are a special
      case of systems with control, Theorem~\ref{thm:converse-cbf}
      still applies).  Note that $\Cc$ is safe because all points in
      $\partial\Cc$ are equilibrium points, and since $h$ is
      continuously differentiable, trajectories of the dynamical
      system are unique, which means that no trajectory can leave the
      safe set.  Furthermore, since $\dot{x} > 0$ whenever
      $x\in\text{Int}(\Cc)$, trajectories of the dynamical system with
      initial condition in $\text{Int}(\Cc)$ converge to $\partial\Cc$
      and therefore item ~\ref{it:zero} does not hold.  Similarly,
      since $\Cc$ is not compact, item~\ref{it:third} does not hold
      either.  Now let us show that item~\ref{it:minus1} does not
      hold, which in turn also implies that item~\ref{it:second} can
      not hold (because item~\ref{it:second} implies
      item~\ref{it:minus1}, as shown in the proof of Theorem IV.3).
      For any $k\in\mathbb{Z}_{>0}$, let
      $\underline{a}_k = (2k+1)\pi-\frac{\pi}{4}$,
      $\bar{a}_k = (2k+1)\pi$, and note that
  \begin{align*}
    h( \underline{a}_k ) \!\geq\! \frac{e^{-\frac{\pi}{4}} \sqrt{2}}{2}, \
    \cos( \underline{a}_k ) \!=\! -\frac{\sqrt{2}}{2}, \
    h( \bar{a}_k )\!=\! 0, \ \cos( \bar{a}_k ) \!=\! -1,
  \end{align*}
  and $\cos(a) < 0$ for all $a\in[\underline{a}_k, \bar{a}_k]$.
  Therefore, by letting $r = e^{-\frac{\pi}{4}}\frac{\sqrt{2}}{2}$, there exists 
  a sequence $(a_k)_{k\in\mathbb{Z}_{>0}}$
  (with $a_k \in ( (2k+1)\pi-\frac{\pi}{4}, (2k+1)\pi )$ for all $k\in\mathbb{Z}_{>0}$)
  such that 
  $\frac{r}{2} < h(a_k) < r$,
  and $\cos(a_k) < 0$ for all $k\in\mathbb{Z}_{>0}$.
  Now, note that 
  \begin{align*}
    h^{\prime}(a_k) f(a_k) &= h(a_k)^2 + e^{a_k}\cos(a_k)h(a_k).
  \end{align*}
  Since $\sin(a_k) < \frac{r}{e^{a_k}} < 1$, it follows that 
  $|\cos(a_k)| = \sqrt{1-\sin^2(a_k)} > \sqrt{1-\Big( \frac{r}{e^{a_k}} \Big)^2 }$,
  and 
  \begin{align*}
    h^{\prime}(a_k) f(a_k) < r^2 - e^{a_k} \frac{r}{2} \sqrt{1-\Big( \frac{r}{e^{a_k}} \Big)^2 }.
  \end{align*}
  Therefore, since $\lim\limits_{k\to\infty} a_k = \infty$,
  \begin{align*}
    \lim\limits_{k\to\infty} h^{\prime}(a_k) f(a_k) = -\infty,
  \end{align*}
  which means that~\ref{it:minus1} does not hold.  }
  \demo
\end{example}


\subsection{Extended Control Barrier
  Functions}\label{sec:extended-cbf}

In this section, we show that Theorem~\ref{thm:compact-converse-cbf}
remains valid when we drop the compactness assumption provided that
one employs a slight generalization of the notion of CBF. The latter
requires a generalization of the notion of extended
class~$\Kc_{\infty}$.

\begin{definition}\longthmtitle{Extended class $\Kc \Kc$ function}
  A continuous function
  $\alpha:\real \times \real_{\geq0} \to \real_{\geq0}$ is of class
  $\Kc \Kc$ if $\alpha(\cdot,s)$ and $\alpha(r,\cdot)$ are strictly
  increasing for all $s\geq0$, $r\in\real$, respectively and
  $\alpha(0,s)=0$ for all $s\geq0$. It is of extended class
  $\Kc_{\infty}\Kc$ if, additionally,
  $\lim_{r\to\pm\infty}\alpha(r,s)=\pm\infty$ for all $s\geq0$.
\end{definition}

We are ready to introduce the notion of \textit{Extended Control
  Barrier Functions}.

\begin{definition}\longthmtitle{Extended Control Barrier Function}
  Let $h:\real^{n}\to\real$ be a continuously differentiable function
  and let $\Cc$ be defined as in~\eqref{eq:safe-set}. The function $h$
  is an \textit{extended control barrier function} (\textbf{eCBF}) of
  $\Cc$ if there exists an extended class
  $\mathcal{K}_{\infty}\mathcal{K}$ function $\alpha$ such that, for
  all $x\in\Cc$, there exists a control $u\in\real^{m}$ satisfying:
  \begin{align}\label{eq:ecbf-condition}
    \nabla h(x)^T f(x,u) \geq -\alpha(h(x),\norm{x}).
  \end{align}
\end{definition}
\smallskip

Note that eCBFs allow for the time derivative of $h$ to become
arbitrarily negative as $\norm{x}$ approaches infinity, as long as
such derivative stays nonnegative at the boundary of~$\Cc$. 
The following result relates the notions of CBF and eCBF.

\begin{proposition}\longthmtitle{Relationship between CBFs and
    eCBFs}\label{prop:cbfs-ecbfs}
  A CBF of~$\Cc$ is also an eCBF of~$\Cc$.  Moreover, if $\Cc$ is
  compact, an eCBF of~$\Cc$ is a CBF of~$\Cc$.
\end{proposition}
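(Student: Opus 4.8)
The statement has two implications. The first, that a CBF of $\Cc$ is an eCBF of $\Cc$, should be almost immediate: given a CBF $h$ with extended class $\Kc_\infty$ function $\alpha$ witnessing \eqref{eq:cbf-ineq}, I would define $\tilde\alpha(r,s) := \alpha(r) + s$ (or any expression that is strictly increasing in $r$, strictly increasing in $s$, vanishes at $r=0$ for every $s$, and goes to $\pm\infty$ as $r\to\pm\infty$; adding $s$ keeps $\tilde\alpha(0,s)=s$, so one should instead use something like $\tilde\alpha(r,s)=\alpha(r)(1+s)$ if one wants $\tilde\alpha(0,s)=0$). With $\tilde\alpha(r,s)=\alpha(r)(1+s)$ one checks it is extended class $\Kc_\infty\Kc$ and, since $\alpha(h(x))(1+\norm{x}) \ge \alpha(h(x))$ when $h(x)\ge 0$ and $\alpha$ is nonnegative there, the CBF inequality implies \eqref{eq:ecbf-condition} at every $x\in\Cc$ with the same control $u$. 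So the first part is a one-line construction plus a routine verification that $\tilde\alpha$ lies in the right class.

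The second implication is where the content lies. Suppose $\Cc$ is compact and $h$ is an eCBF of $\Cc$ with extended class $\Kc_\infty\Kc$ function $\alpha$. I want to produce an extended class $\Kc_\infty$ function $\bar\alpha$ (a function of $h(x)$ alone) such that \eqref{eq:cbf-ineq} holds for all $x\in\Cc$ with the same selected controls. The key observation is that on the compact set $\Cc$ the quantity $\norm{x}$ is bounded, say $\norm{x}\le R$ for all $x\in\Cc$. Then for every $x\in\Cc$, since $\alpha(h(x),\cdot)$ is increasing, $\alpha(h(x),\norm{x}) \le \alpha(h(x), R)$. Now set $\bar\alpha(r) := \alpha(r,R)$: this is a function of a single variable, it is continuous, strictly increasing in $r$ (by the defining property of class $\Kc\Kc$), satisfies $\bar\alpha(0)=\alpha(0,R)=0$, and $\lim_{r\to\pm\infty}\bar\alpha(r)=\lim_{r\to\pm\infty}\alpha(r,R)=\pm\infty$ (using the extended $\Kc_\infty\Kc$ property for the fixed second argument $s=R$). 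Hence $\bar\alpha$ is an extended class $\Kc_\infty$ function. For any $x\in\Cc$ and the control $u$ witnessing \eqref{eq:ecbf-condition}, we get $\nabla h(x)^T f(x,u) \ge -\alpha(h(x),\norm{x}) \ge -\alpha(h(x),R) = -\bar\alpha(h(x))$, which is exactly \eqref{eq:cbf-ineq}. Therefore $h$ is a CBF of $\Cc$.

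I do not anticipate a genuine obstacle here; the argument is essentially a monotonicity-plus-compactness bound. The only point requiring a small amount of care is making sure the substituted single-variable function $\bar\alpha$ genuinely lies in the extended class $\Kc_\infty$ family as defined in the Preliminaries — in particular that strict monotonicity in $r$ is inherited (it is, by definition of class $\Kc\Kc$, since $\alpha(\cdot,s)$ is strictly increasing for each fixed $s\ge 0$) and that the limits at $\pm\infty$ hold for the specific fixed value $s=R$ (again guaranteed by the extended $\Kc_\infty\Kc$ definition, which requires $\lim_{r\to\pm\infty}\alpha(r,s)=\pm\infty$ for all $s\ge 0$). For the first implication, the analogous care is in choosing a combining rule for $\alpha(r)$ and $s$ that preserves all four defining properties of extended class $\Kc_\infty\Kc$; $\tilde\alpha(r,s)=\alpha(r)(1+s)$ works, and one should note that on $r\ge 0$ one has $\tilde\alpha(r,s)\ge\alpha(r)$ so the CBF inequality is not weakened on $\Cc$. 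With these bookkeeping points settled, both directions follow directly.
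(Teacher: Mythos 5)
Your proof is correct and follows essentially the same route as the paper: the first implication uses the identical construction $\alpha(r)(1+s)$, and the second uses the same compactness-plus-monotonicity idea. Your choice of $\bar\alpha(r)=\alpha(r,R)$ with $R$ a bound on $\norm{x}$ over $\Cc$ is in fact a slightly cleaner instantiation than the paper's, which takes a supremum of $\alpha_e(r,\norm{x})$ over the sublevel set $\{0\le h_e(x)\le r\}$ and then must separately extend the resulting function to negative arguments.
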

\begin{proof}
  Let $h$ be a CBF of $\Cc$, i.e., there exists an extended class
  $\Kc_{\infty}$ function $\alpha$ such that, for all $x\in\Cc$, there
  exists $u\in\real^m$ satisfying~\eqref{eq:cbf-ineq}.  Define 
  \begin{align*}
    \hat{\alpha}(r,s):=\alpha(r)(s+1)
  \end{align*}
  and note that it is an extended class $\Kc_{\infty}\Kc$ function.
  Since $\hat{\alpha}(h(x),\norm{x})\geq\alpha(h(x))$ for all
  $x\in\Cc$, it follows that for all $x\in\Cc$ there exists
  $u\in\real^m$ satisfying~\eqref{eq:ecbf-condition}.

  Now, suppose that $\Cc$ is compact and let $h_e$ be an eCBF
  of~$\Cc$, i.e., there exists a class $\Kc_{\infty}\Kc$ function
  $\alpha_e$ such that, for all $x\in\Cc$, there exists $u\in\real^m$
  satisfying~\eqref{eq:ecbf-condition}.
  Define, for $r\geq0$,
  \begin{align*}
    \tilde{\alpha}_e(r) := \sup\limits_{ \setdef{x\in\real^n}{\ 0\leq
    h_e(x)\leq r} } \alpha_e(r,\norm{x}). 
  \end{align*}
  Since $\Cc$ is compact, $\tilde{\alpha}_e(r)$ is finite for all
  $r\geq0$. Furthermore, it is strictly increasing, satisfies
  $\tilde{\alpha}_e(0) = 0$, and satisfies
  $\lim\limits_{r\to\infty}\alpha(r)=\infty$, so it can be extended 
  to $\real_{<0}$ so that is of
  extended class $\Kc_{\infty}$. Since for all $x\in\Cc$,
  $\tilde{\alpha}_e(h(x))\geq \alpha(h(x),\norm{x})$, we have that, for
  all $x\in\Cc$, there exists $u\in\real^m$
  satisfying~\eqref{eq:cbf-ineq}, i.e.,
  $h_e$ is a CBF.
\end{proof}

According to Proposition~\ref{prop:cbfs-ecbfs}, eCBFs coincide with
CBFs when the safe set is compact.  As we show later, the notion of
eCBF is a more suitable notion to deal with safe sets that are
unbounded.  Similarly to Definition~\ref{def:compat-clf-cbf}, we can
also define a notion of compatibility for eCBFs (instead of CBFs) and CLFs.

\begin{definition}
  A CLF $V$ and an eCBF $h$ are \textbf{compatible} at $x\in\Cc$ if
  there exists $u\in\real^{m}$ satisfying~\eqref{eq:clf-ineq}
  and~\eqref{eq:ecbf-condition} simultaneously. We refer to both
  functions as compatible in $\Cc$ if they are compatible at every
  point in $\Cc$.
\end{definition}

Since eCBFs also enforce the satisfaction of Nagumo's
Theorem~\cite{MN:42}, they can be used to certify safety, as stated in
the following result. We omit its proof, which follows an argument
analogous to that of~\cite[Theorem 2]{ADA-SC-ME-GN-KS-PT:19}.


\begin{proposition}\longthmtitle{eCBFs certify safety}
  Let $\Cc\subset\real^n$, $h$ an eCBF of $\Cc$, and $0$ a regular
  value of $h$.  Any Lipschitz controller $k:\real^n\to\real^m$ that
  satisfies
  $k(x)\in K_{\text{ecbf}}(x):=\setdef{u\in\real^m}{\nabla h(x)^T
    f(x,u)+\alpha(h(x),\norm{x})\geq0}$ for all $x\in\Cc$ renders the
  set $\Cc$ forward invariant.
\end{proposition}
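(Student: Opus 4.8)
The plan is to adapt the proof of~\cite[Theorem~2]{ADA-SC-ME-GN-KS-PT:19} to the eCBF inequality~\eqref{eq:ecbf-condition}. Fix a Lipschitz controller $k$ with $k(x)\in K_{\text{ecbf}}(x)$ for all $x\in\Cc$. Since $f$ is locally Lipschitz and $k$ is Lipschitz, the closed-loop vector field $x\mapsto f(x,k(x))$ is locally Lipschitz, so each $x_0\in\Cc$ gives rise to a unique maximal solution $x(\cdot)$ of $\dot x=f(x,k(x))$ with $x(0)=x_0$, defined on some interval $[0,T_{\max})$, along which $t\mapsto h(x(t))$ is $\Cc^1$ with $\frac{d}{dt}h(x(t))=\nabla h(x(t))^T f(x(t),k(x(t)))$. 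The goal is to show $h(x(t))\geq0$ on $[0,T_{\max})$.

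I would argue by contradiction: assume $h(x(0))\geq0$ but $h(x(\tau))<0$ for some $\tau\in(0,T_{\max})$, and let $t_1:=\sup\setdef{t\in[0,\tau]}{h(x(t))\geq0}$. Since $\setdef{x\in\real^n}{h(x)\geq0}$ is closed and $t\mapsto h(x(t))$ is continuous, one has $0\leq t_1<\tau$, $h(x(t_1))=0$, and $h(x(t))<0$ for all $t\in(t_1,\tau]$. The only place the new definition enters is the following: because $k(x(t))\in K_{\text{ecbf}}(x(t))$, $\alpha(\cdot,\norm{x(t)})$ is strictly increasing, and $\alpha(0,\norm{x(t)})=0$, for every $t\in(t_1,\tau]$ we obtain
\begin{align*}
  \frac{d}{dt}h(x(t)) \geq -\alpha\big(h(x(t)),\norm{x(t)}\big) > -\alpha\big(0,\norm{x(t)}\big) = 0 .
\end{align*}
Together with $\frac{d}{dt}h(x(t_1))\geq-\alpha(0,\norm{x(t_1)})=0$, this makes $t\mapsto h(x(t))$ nondecreasing on $[t_1,\tau]$, so $h(x(\tau))\geq h(x(t_1))=0$, a contradiction. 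Hence $x(t)\in\Cc$ for all $t\in[0,T_{\max})$, i.e.\ $\Cc$ is forward invariant.

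I do not expect a genuine obstacle: at $\partial\Cc$ the eCBF inequality reduces to $\nabla h(x)^T f(x,k(x))\geq0$, which, since $0$ is a regular value and hence $\partial\Cc$ is a $\Cc^1$ manifold, is exactly the subtangentiality condition of Nagumo's theorem~\cite{MN:42}, so the claim also follows directly from it. The one point to check is that the extra dependence of the class $\Kc_{\infty}\Kc$ function on $\norm{x}$ does not disturb the sign bookkeeping, and it does not, since only monotonicity of $\alpha$ in its first argument and $\alpha(0,s)=0$ are used. If one instead prefers the comparison-lemma route of~\cite[Theorem~2]{ADA-SC-ME-GN-KS-PT:19}, on each compact $[0,T]\subset[0,T_{\max})$ one bounds $\norm{x(t)}\leq R$, replaces $\alpha(\cdot,\norm{x(t)})$ by the extended class $\Kc_{\infty}$ function $\bar\alpha_R:=\alpha(\cdot,R)$, and compares $h(x(\cdot))$ with the solution of $\dot y=-\bar\alpha_R(y)$, $y(0)=h(x_0)\geq0$; this introduces no new ideas either.
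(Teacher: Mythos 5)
Your main argument has a genuine gap. The hypothesis only guarantees $k(x)\in K_{\text{ecbf}}(x)$ for $x\in\Cc$, but in the contradiction step you invoke the inequality $\frac{d}{dt}h(x(t))\geq-\alpha(h(x(t)),\norm{x(t)})$ for $t\in(t_1,\tau]$, which is precisely where $h(x(t))<0$ and hence $x(t)\notin\Cc$; nothing in the statement gives you that inequality there. What you do legitimately know is only that $\frac{d}{dt}h(x(t))\vert_{t=t_1}\geq-\alpha(0,\norm{x(t_1)})=0$ at the single time $t_1$, and a nonnegative derivative at one instant does not make $t\mapsto h(x(t))$ nondecreasing on $[t_1,\tau]$ (consider $h(x(t))=-(t-t_1)^2$, whose derivative vanishes at $t_1$ yet which is immediately negative afterwards). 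This is exactly the difficulty that the regular-value assumption in the statement is there to resolve, and your contradiction argument never uses that assumption.

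The remark you relegate to the end is in fact the proof, and it is the one the paper intends: the paper omits the argument as ``analogous to'' \cite[Theorem~2]{ADA-SC-ME-GN-KS-PT:19} and elsewhere notes that eCBFs enforce the hypothesis of Nagumo's theorem. Concretely, on $\partial\Cc$ the eCBF inequality gives $\nabla h(x)^Tf(x,k(x))\geq-\alpha(0,\norm{x})=0$; since $0$ is a regular value, $\nabla h(x)\neq0$ there and $\setdef{v\in\real^n}{\nabla h(x)^Tv\geq0}$ is the Bouligand tangent cone to $\Cc$ at $x$; the closed-loop field $x\mapsto f(x,k(x))$ is locally Lipschitz, so Nagumo's theorem~\cite{MN:42} yields forward invariance. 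Note that your comparison-lemma variant suffers from the same defect as the contradiction argument unless the eCBF condition is assumed on an open set containing $\Cc$ (as it is in the cited reference), because the comparison lemma needs the differential inequality along the entire trajectory, not only while the trajectory remains in $\Cc$. I would therefore drop the first argument and promote the Nagumo one to be the proof.
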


We also point out that the control designs proposed
in~\cite{PO-JC:19-cdc,PM-JC:23-csl,ADA-XX-JWG-PT:17} can easily be
adapted using eCBFs instead of CBFs.



Next we show that the flexibility added by the class $\Kc_{\infty}\Kc$
function allows eCBFs to resolve some of the issues faced by CBFs.

\begin{example}[Examples~\ref{ex:wrong-candidate} and~\ref{ex:example-not-covered-by-converse-CBF-theorem}
  revisited]\label{ex:wrong-candidate-ecbf}
  {\rm
  We show here that $h(x,y)=x$ is an eCBF for
  Example~\ref{ex:wrong-candidate}. Take $\alpha(r,s)=rs$ as the
  extended class $\Kc_\infty\Kc$ function
  in~\eqref{eq:ecbf-condition}. It is straightforward to check that
  $\nabla h(x,y)^{T} \big(\begin{smallmatrix} xy+1 \\
    -y+u \end{smallmatrix}\big) = xy+1\geq
  -x\sqrt{x^2+y^2}=-\alpha(h(x,y),\norm{(x,y)})$ for $x\geq0$ and
  hence~\eqref{eq:ecbf-condition} is satisfied for all points in
  $\Cc$.  
  By a similar argument, the function $h$ defined in~\ref{ex:example-not-covered-by-converse-CBF-theorem}
  is also an eCBF for the dynamics defined therein.
  \problemfinal
  }
\end{example}

The following result states that the existence of an eCBF is also
necessary for a set to be safe, generalizing
Theorem~\ref{thm:compact-converse-cbf} to safe sets that might be unbounded.

\begin{theorem}\longthmtitle{Converse eCBF
    result for safe
    sets}\label{thm:converse-ecbf}
  Given a control system~\eqref{eq:control-sys}, let
  $h:\real^{n}\to\real$ be a continuously differentiable function with
  $\Cc$ defined as in~\eqref{eq:safe-set} and with $0$ a regular value
  of $h$. If $\Cc$ is safe, then $h$ is an eCBF.
\end{theorem}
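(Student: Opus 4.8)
The plan is to mirror the two-step strategy used in the proof of Theorem~\ref{thm:converse-cbf}\ref{it:zero}: first build a candidate (possibly nonsmooth) certificate from the value function associated with a safe controller, then smoothen it, but now exploiting the extra freedom of the class $\Kc_\infty\Kc$ function to avoid the boundedness issues that forced the additional hypotheses in Theorem~\ref{thm:converse-cbf}. Since $\Cc$ is safe, fix a locally Lipschitz safe controller $u_0:\real^n\to\real^m$ so that $\Cc$ is forward invariant under $\dot x = f(x,u_0(x))$. As in the proof of Theorem~\ref{thm:converse-cbf}\ref{it:zero}, define $V(x,t)=\min_{s\in[t,0]}h(x(s))$ along the solution of $\dot x=f(x,u_0(x))$ starting at $x$ at time $t<0$, and let $V_{-\infty}(x)=\lim_{t\to-\infty}V(x,t)$. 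Safety of $u_0$ gives $\Cc=\setdef{x\in\real^n}{V_{-\infty}(x)\ge 0}$; however, unlike case~\ref{it:zero}, we can no longer assume $V_{-\infty}>0$ on $\Int(\Cc)$, so I would instead use $h$ itself on the interior and only use $V_{-\infty}$ to control the behaviour near $\partial\Cc$. The cleaner route is: since $0$ is a regular value of $h$, there is a tubular neighbourhood of $\partial\Cc$ on which $\nabla h\ne 0$; on $\Cc$, for each $x$ there is a control ($u_0(x)$) keeping the trajectory in $\Cc$, so by Nagumo's theorem $\nabla h(x)^T f(x,u_0(x))\ge 0$ whenever $h(x)=0$, and by continuity $\nabla h(x)^T f(x,u_0(x)) \ge -g(x)$ for some continuous $g$ on $\Cc$ with $g\le 0$ on $\partial\Cc$.

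Given such a $g$, the key point is that we can \emph{absorb} its growth into the second argument of a class $\Kc_\infty\Kc$ function. Concretely, define, for $r\ge 0$,
\begin{align*}
  \alpha(r,s) := \sup\big\{\, g(x)^+ : x\in\Cc,\ 0\le h(x)\le r,\ \norm{x}\le s \,\big\} + r\,(s+1),
\end{align*}
where $g^+=\max\{g,0\}$ and the supremum over the empty set is $0$. For each fixed $s$ the set $\setdef{x\in\Cc}{\norm{x}\le s}$ is compact, so the supremum is finite; the term $r(s+1)$ forces strict monotonicity in both arguments and $\alpha(0,s)=0$ (here I use $g\le 0$, hence $g^+=0$, on $\partial\Cc=\{h=0\}$, so the sup vanishes at $r=0$) and $\lim_{r\to\infty}\alpha(r,s)=\infty$; after extending to $r<0$ in the obvious strictly increasing way we obtain an extended class $\Kc_\infty\Kc$ function. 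By construction $\alpha(h(x),\norm{x})\ge g(x)^+\ge g(x)$ for all $x\in\Cc$, hence $\nabla h(x)^Tf(x,u_0(x))\ge -g(x)\ge -\alpha(h(x),\norm{x})$, i.e.\ $h$ satisfies~\eqref{eq:ecbf-condition} with witness $u=u_0(x)$ for every $x\in\Cc$, so $h$ is an eCBF. Note that since $h$ is already assumed continuously differentiable, no smoothing step is needed at all.

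The main obstacle, and the step I would spend the most care on, is establishing the continuous lower bound $\nabla h(x)^Tf(x,u_0(x))\ge -g(x)$ with $g$ vanishing (or nonpositive) on $\partial\Cc$: this is exactly the quantitative form of Nagumo's condition, and one must be careful that $u_0$ is only locally Lipschitz, not continuous in a stronger sense, and that $\partial\Cc$ need not be compact. One clean way around any subtlety is to take $g(x) := \big(\,{-}\nabla h(x)^Tf(x,u_0(x))\,\big)^+$ directly, which is continuous (composition of continuous maps with $\max$), automatically satisfies $\nabla h(x)^Tf(x,u_0(x)) \ge -g(x)$ by definition, and is $0$ on $\partial\Cc$ precisely because forward invariance of $\Cc$ under $u_0$ together with $0$ being a regular value of $h$ forces $\nabla h(x)^Tf(x,u_0(x))\ge 0$ there (a standard Nagumo/subtangentiality argument, as in the proof of Theorem~\ref{thm:cbfs-certify-safety}). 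With $g$ chosen this way the rest is the elementary construction of $\alpha$ above, and the argument that each $\sup$ is finite is just compactness of closed balls intersected with $\Cc$.
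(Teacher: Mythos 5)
Your proposal is correct and, once you discard the opening detour through $V_{-\infty}$ (which is indeed unnecessary here since $h$ is already $\Cc^1$), it is essentially the paper's own argument: use a locally Lipschitz safe controller $u_0$, invoke Nagumo's theorem together with the regular-value assumption to get $\nabla h(x)^T f(x,u_0(x))\geq 0$ on $\partial\Cc$, take a supremum of the deficit over the compact sets $\setdef{x\in\Cc}{0\leq h(x)\leq r,\ \norm{x}\leq s}$, and dominate the result by an extended class $\Kc_\infty\Kc$ function. Your explicit $+\,r(s+1)$ term just makes the paper's ``we can find a dominating class $\Kc_\infty\Kc$ function'' step concrete.
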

\begin{proof}
  For $r\geq0$, $c\geq0$, define the set
  $S_{r,c}:=\setdef{x\in\real^n}{0\leq h(x)\leq r, \norm{x}\leq
    c+c_{\min}}$, where $c_{\min}\geq0$ is taken so that
  $S_{0,0}\neq\emptyset$ (for instance, one can set $c_{\min}$ as the
  distance from the origin to $\partial\Cc$). Since
  $S_{0,0}\subseteq S_{r,c}$ for any $r\geq0$, $c\geq0$, this
  guarantees that $S_{r,c}\neq\emptyset$ for all $r\geq0,
  c\geq0$. Next, define
  \begin{align}
    \hat{\alpha}(r,c):=-\min_{x\in S_{r,c}} \nabla h(x)^T
    f(x,\hat{u}(x)) ,
  \end{align}
  where $\hat{u}:\real^{n}\to\real^{m}$ is a locally Lipschitz
  controller that renders $\Cc$ safe (which exists, by
  assumption). Since $S_{r,c}$ is compact for all $r\geq0$, $c\geq0$,
  $\hat{\alpha}(r,c)$ is finite for all $r\geq0$, $c\geq0$.  Note that
  $\hat{\alpha}$ is non-decreasing in both its first and second
  arguments. Moreover, since $\Cc$ is forward invariant under
  $\dot{x}=f(x,\hat{u}(x))$ and $0$ is a regular value of $h$, by
  Nagumo's theorem~\cite{MN:42}, $\hat{\alpha}(0,c)\leq0$ for all
  $c\geq0$.
  Hence, we can find a class $\mathcal{K}_{\infty}\mathcal{K}$
  function $\alpha$ such that $\alpha(r,c)\geq\hat{\alpha}(r,c)$ for
  all $r\geq0,c\geq0$. This ensures that
  \begin{align*}
    \nabla h(x)^T f(x,\hat{u}(x)) \geq -\alpha(h(x),\norm{x}).
  \end{align*}
  for all $x\in\Cc$, hence completing the proof.
\end{proof}

\begin{remark}\longthmtitle{Comparison with time-varying barrier
    functions-cont'd}\label{rem:comparison-time-varying-bfs-continued}
  {\rm
  As mentioned in
  Remark~\ref{rem:comparison-time-varying-bfs},~\cite[Theorem~2]{MM-RGS:23}
  provides a necessary and sufficient condition for safety in terms of
  so-called time-varying barrier functions, which might however be
  difficult to construct and utilize in practice to design safe
  controllers.  Instead, in the less general setting considered here,
  Theorem~\ref{thm:converse-ecbf} ensures that if $\Cc$ is safe, any
  scalar continuously differentiable function
  satisfying~\eqref{eq:safe-set} and having $0$ as a regular value is
  an eCBF.  This ensures that $h$ is time-invariant and continuously
  differentiable, and instead of computing a complicated reachable
  set, only requires finding a scalar continuously differentiable
  function satisfying~\eqref{eq:safe-set}, with $0$ being a regular
  value of~it.  \demo
  }
\end{remark}


\section{Converse Theorems for Joint Safety and
  Stability}\label{sec:converse-sf-st}

In this section we address problems (P2) and (P3) in
Section~\ref{sec:problem-statement}.  We start by studying under what
conditions the existence of either (i) a compatible CLF-CBF pair or
(ii) a CLBF is guaranteed.  Our motivation comes from the fact that in
either case locally Lipschitz feedback controllers that
achieve safe stabilization can be designed under appropriate technical
conditions, cf.  Section~\ref{sec:preliminaries}.
Throughout this section, we assume $\mathbf{0}_n\in\text{Int}(\Cc)$.
%
%


Our starting point is the result in~\cite[Theorem 11]{PB-CMK:17},
which shows that if the set $\real^{n}\backslash\Cc$ is
bounded, then a CLBF of $\real^n\backslash\Cc$ can not exist. 
In fact the proof of~\cite[Theorem 11]{PB-CMK:17} shows that if 
$\real^n\backslash\Cc$ is bounded, a
locally Lipschitz safe stabilizing controller can not exist.  More
generally, the same argument shows that if $\real^{n}\backslash\Cc$
has a bounded connected component, then a safe stabilizing controller
can not exist.

\begin{proposition}\longthmtitle{Topological obstruction for existence
    of compatible CLF-CBF pair}\label{prop:topological-obstruction}
  If the set $\real^n\backslash\Cc$ has a bounded connected component,
  then there does not exist a strictly compatible CLF-CBF pair
  on~$\Cc$.
\end{proposition}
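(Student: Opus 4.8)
The plan is to argue by contradiction, reducing to a topological obstruction to safe stabilization that extends~\cite[Theorem~11]{PB-CMK:17}. Suppose $V,h$ were a strictly compatible CLF--CBF pair on $\Cc$; then at every $x\in\Cc\setminus\{0\}$ there is an input satisfying~\eqref{eq:clf-ineq} and~\eqref{eq:cbf-ineq} strictly, so by~\cite{PO-JC:19-cdc} there exists a smooth---hence locally Lipschitz---safe stabilizing controller $k\colon\real^{n}\to\real^{m}$ in $\Cc$. Put $F(x):=f(x,k(x))$. Then $\Cc$ is closed and forward invariant for $\dot x=F(x)$, the origin is asymptotically stable, and its region of attraction $\mathcal{A}$ contains an open set containing $\Cc$; since every trajectory issuing from $\Cc$ stays in $\Cc$, converges to the origin, and $\Cc$ is closed, we also have $0\in\Cc$. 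It thus suffices to show that, when $\real^{n}\setminus\Cc$ has a bounded connected component, no such vector field $F$ can exist---precisely the extension of~\cite[Theorem~11]{PB-CMK:17} anticipated in the discussion preceding the statement.

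The core of that extension is the claim that $\Cc$ must be contractible. To prove it, I would take a smooth converse-Lyapunov function $\bar W\colon\mathcal{A}\to\realnonneg$ for the origin (proper on $\mathcal{A}$, positive definite, with $\nabla\bar W(x)^{T}F(x)<0$ on $\mathcal{A}\setminus\{0\}$), which exists by classical converse Lyapunov results. Along any trajectory in $\mathcal{A}\setminus\{0\}$, $\bar W$ decreases strictly to $0$, so for $x\in\Cc\setminus\{0\}$ and $\lambda\in[0,\bar W(x)]$ there is a unique $\tau(x,\lambda)\ge0$ with $\bar W(\phi_{\tau(x,\lambda)}(x))=\lambda$, where $\phi$ is the flow of $F$ (forward-complete on $\mathcal{A}$, since trajectories there converge to the origin). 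Define $H\colon\Cc\times[0,1]\to\Cc$ by $H(x,t):=\phi_{\tau(x,(1-t)\bar W(x))}(x)$ for $t<1$, and $H(x,1):=0$, $H(0,t):=0$. Forward invariance of $\Cc$ gives $H(\Cc\times[0,1])\subseteq\Cc$; Lyapunov stability of the origin together with the fact that the sublevel sets of the proper function $\bar W$ shrink to $\{0\}$ yields continuity of $H$, including at $t=1$ and at the origin. Hence $H$ contracts $\Cc$ to the origin, so $\Cc$ is contractible.

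Finally I would derive a contradiction from the bounded component. Let $B$ be a bounded connected component of the open set $\real^{n}\setminus\Cc$; then $B$ is open and nonempty, $\overline{B}$ is compact, $\partial B\subseteq\partial\Cc\subseteq\Cc$, and $0\notin B$. In the prototypical situation (e.g.\ when $\real^{n}\setminus\Cc$ is bounded), picking $p\in B$ and $R$ large enough that $\overline{B}\subseteq B_{R}(p)$, the sphere $\partial B_{R}(p)$ lies in $\Cc$, and the radial retraction $\real^{n}\setminus\{p\}\to\partial B_{R}(p)$ splits the inclusion $\partial B_{R}(p)\hookrightarrow\Cc$; hence $\widetilde H_{n-1}(\Cc)\neq0$, contradicting contractibility. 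The general case, in which $\real^{n}\setminus\Cc$ may also have unbounded or several components, follows from the same separation/Alexander-duality bookkeeping as in~\cite[Theorem~11]{PB-CMK:17}: a bounded component $B$ makes $\mathbb{S}^{n}$ minus the appropriate compact image of $\Cc$ disconnected, which is incompatible with $\Cc$ being contractible. Either way this contradiction proves the proposition.

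I expect the main obstacles to be two routine-but-technical points that can be sidestepped by citation: (i) the continuity of the reparametrized homotopy $H$ up to $t=1$ and across the origin---this is essentially the classical fact that the region of attraction of an asymptotically stable equilibrium of a locally Lipschitz vector field is contractible, indeed homeomorphic to $\real^{n}$; and (ii) the topological bookkeeping needed when $\real^{n}\setminus\Cc$ is unbounded or disconnected, which is already carried out in~\cite[Theorem~11]{PB-CMK:17}. Given these, the reduction through~\cite{PO-JC:19-cdc} makes the argument short.
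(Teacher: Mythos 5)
Your proposal follows essentially the same route as the paper's proof: argue by contradiction, use the universal formula of~\cite{PO-JC:19-cdc} to extract a smooth safe stabilizing controller from the strictly compatible pair, and then invoke the topological obstruction of~\cite[Theorem~11]{PB-CMK:17} to rule out such a controller when $\real^{n}\backslash\Cc$ has a bounded connected component. The only difference is that you spell out the contractibility/duality argument that the paper delegates entirely to the citation; the reduction itself is identical and correct.
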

\begin{proof}
  Suppose there exits a strictly compatible CLF-CBF pair on
  $\Cc$. Then, by using the universal formula in~\cite{PO-JC:19-cdc}
  (which is constructed by computing the centroid of the set of controls satisfying 
  the CLF and CBF conditions),
  one can construct a smooth safe stabilizing controller on
  $\Cc$. 
  But, since $\real^n\backslash\Cc$ contains a bounded
  connected component, by the argument used in~\cite[Theorem
  11]{PB-CMK:17}, this can not be possible, reaching a contradiction.
\end{proof}

Even though Proposition~\ref{prop:topological-obstruction} only
ensures the non-existence of a \textit{strictly} compatible CLF-CBF
pair, it also shows that even if a compatible CLF-CBF pair exists, one
would not be able to leverage it to design a locally Lipschitz
controller that safely stabilizes the system. The above result
explains why the recent body of
literature~\cite{MFR-APA-PT:21,XT-DVD:24,PM-JC:23-csl,YC-PM-EDA-JC:24-cdc,PM-YC-EDA-JC:25-jnls}
on locally Lipschitz controllers that achieve safe stabilization
obtain closed-loop systems with undesirable equilibrium points in the
boundary of the safe set, when the set of unsafe states has a bounded
connected component. We note also that the proof of
  Proposition~\ref{prop:topological-obstruction} relies
  on~\cite[Theorem 11]{PB-CMK:17}, which shows that if
  $\real^n\backslash\Cc$ is bounded, there can not exist a smooth safe
  stabilizing controller. A similar result (for analytic vector
  fields) is also available in~\cite[Proposition 3]{DEK:87-icra}.

The next result identifies another scenario where a CLBF does not
exist.

\begin{proposition}\longthmtitle{No CLBF exists for unbounded safe
    sets}\label{prop:no-clbf-unbounded}
  Suppose $\Cc\neq\real^{n}$ is unbounded. Then, there does not exist
  a CLBF of $\real^{n}\backslash\Cc$.
\end{proposition}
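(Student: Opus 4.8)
The plan is to derive a contradiction from the defining properties of a CLBF, exploiting that $\bar{V}$ is proper and lower-bounded on all of $\real^n$. Suppose $\bar{V}$ is a CLBF of $\real^n\backslash\Cc$. Since $\Cc\neq\real^n$, the set $\real^n\backslash\Cc$ is nonempty, so by \eqref{eq:positive} the function $\bar{V}$ takes strictly positive values. Because $\bar{V}$ is proper, the sublevel set $\Uc=\setdef{x\in\real^n}{\bar{V}(x)\leq 0}$ is compact, and by \eqref{eq:nonempty-sublevel-set} it is nonempty. More generally, every sublevel set $\setdef{x\in\real^n}{\bar{V}(x)\leq c}$ is compact. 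The key point is that condition \eqref{eq:clbf-negative-derivative} requires $\inf_{u}\nabla\bar{V}(x)^Tf(x,u)<0$ for \emph{all} $x\in\Cc\backslash\{0\}$, i.e. along any safe stabilizing trajectory $\bar{V}$ is strictly decreasing; combined with properness this forces trajectories starting in $\Cc$ to be bounded and to converge toward the compact set $\Uc$ (this is, in essence, the argument in~\cite[Theorem 2]{MZR-BJ:16}). But $\Cc$ is unbounded, which will be incompatible with $\Cc$ being contained in the region of attraction of a point together with $\bar{V}$ being proper on $\real^n$.

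Concretely, I would argue as follows. First, observe that $\Cc\supseteq \setdef{x\in\real^n}{\bar{V}(x)\le 0}=\Uc$ is false in general, but the relevant containment goes the other way: from \eqref{eq:positive}, $\Uc\subseteq\Cc$, and from \eqref{eq:empty-intersection}, $\overline{\Cc\backslash\Uc}$ is disjoint from $\overline{\real^n\backslash\Cc}$, so in fact $\Uc\subseteq\Int(\Cc)$ and no trajectory can cross from $\Cc$ into the unsafe set. Next, pick $c>0$ large enough that the sublevel set $\Omega_c=\setdef{x\in\real^n}{\bar{V}(x)\le c}$ has nonempty intersection with $\Cc$ but, being compact while $\Cc$ is unbounded, does \emph{not} contain all of $\Cc$. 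Choose a point $x_0\in\Cc$ with $\bar{V}(x_0)>c$; such a point exists because $\bar V$ is proper on $\real^n$ and $\Cc$ is unbounded, so $\bar V$ is unbounded above on $\Cc$. Now invoke~\cite[Proposition 3]{MZR-BJ:16}: a CLBF yields a (smooth) safe stabilizing controller, and along the closed-loop trajectory from $x_0$ the value $\bar{V}$ is nonincreasing and the trajectory stays in $\Cc$ and converges to the origin. Since $\bar{V}(x_0)>c\ge 0$ and $\bar V(0)$ can be taken to be $\le 0$ (the origin lies in $\Uc$ up to the technical conditions), continuity and the decrease condition give a contradiction with how $\Cc$'s unboundedness interacts with properness — more precisely, as I push $x_0$ off to infinity in $\Cc$, the required decrease $\bar V(x_0)\to\infty$ is not an obstruction by itself, so the actual contradiction must be extracted from the topology of $\Cc$ relative to $\Omega_c$.

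The cleanest route, and the one I would ultimately take, is topological rather than trajectory-based: the sublevel set $\Omega_c=\setdef{x\in\real^n}{\bar V(x)\le c}$ is compact for every $c$, hence $\bigcup_{c\ge 0}\Omega_c=\real^n$ exhausts $\real^n$ by compact sets, but within $\Cc$ the sets $\Omega_c\cap\Cc$ form an exhaustion of the unbounded closed set $\Cc$ by compact sets whose boundaries (relative to $\Cc$) are level sets of $\bar V$ on which the dynamics point inward by \eqref{eq:clbf-negative-derivative}. This makes $\Cc$ a "trapping region" that retracts onto $\Uc$, so $\Cc$ is homotopy equivalent to the compact set $\Uc$; but an unbounded closed subset of $\real^n$ that is also forward invariant in this strong sense, together with the requirement that the origin be globally attracting within $\Cc$ for the closed loop, contradicts the Brockett-type obstruction invoked just before the statement (the same mechanism behind Proposition~\ref{prop:topological-obstruction}).

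The main obstacle I anticipate is making the topological/exhaustion argument fully rigorous without overclaiming: one must be careful that $\bar V$ being proper on $\real^n$ (not just on $\Cc$) is what is actually being used, and that the decrease condition \eqref{eq:clbf-negative-derivative} only holds on $\Cc\backslash\{0\}$, not outside $\Cc$, so the "inward-pointing" conclusion on level sets is valid only for the portion of $\partial\Omega_c$ lying in $\Int(\Cc)$. I expect the slick proof in the paper simply reduces to a direct contradiction: if $\Cc$ is unbounded, take a sequence $x_k\in\Cc$ with $\norm{x_k}\to\infty$; properness of $\bar V$ forces $\bar V(x_k)\to\infty$, but then a safe stabilizing trajectory from $x_k$ must decrease $\bar V$ from an arbitrarily large value down to $\bar V(0)\le 0$ while remaining in $\Cc$, and the level set $\setdef{x}{\bar V(x)=\bar V(0)}$ being interior to $\Cc$ with the whole sublevel set $\Uc$ compact yields the contradiction with unboundedness via connectedness of the trajectory. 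I would write it in that direct form, leaning on~\cite[Theorem 2, Proposition 3]{MZR-BJ:16} for the trajectory behavior and on the compactness of $\Uc$ coming from properness.
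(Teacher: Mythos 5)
There is a genuine gap: you assemble the right ingredients (properness of $\bar V$ implies every sublevel set, in particular $\Uc$, is compact; condition \eqref{eq:empty-intersection} constrains how $\Cc\backslash\Uc$ meets the unsafe set) but you never extract an actual contradiction from them. Each of the three arguments you sketch fails to close. The trajectory argument proves nothing: a connected trajectory descending from $\bar V(x_k)\to\infty$ down to $\bar V(0)\le 0$ while remaining in an unbounded $\Cc$ is perfectly consistent, so ``connectedness of the trajectory'' yields no contradiction. The homotopy argument also proves nothing: an unbounded closed set can be homotopy equivalent to a compact one ($\real^n$ itself is contractible), so ``$\Cc$ retracts onto the compact set $\Uc$'' is not an obstruction. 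And your deduction from \eqref{eq:empty-intersection} is aimed in the wrong direction: the useful consequence is not $\Uc\subseteq\Int(\Cc)$ but rather $\partial\Cc\subseteq\Uc$. Indeed, every point of $\partial\Cc$ lies in $\overline{\real^n\backslash\Cc}$, hence by \eqref{eq:empty-intersection} not in $\overline{\Cc\backslash\Uc}$, hence in $\Uc$; combined with \eqref{eq:positive} and continuity this forces $\bar V\equiv 0$ on $\partial\Cc$, so $\partial\Cc$ is contained in the compact set $\Uc$. That is the crux the paper uses (via \cite[Remark 13]{YM-YL-MF-JL:22}): when both $\Cc$ and $\real^n\backslash\Cc$ are unbounded, $\partial\Cc$ is unbounded, contradicting its containment in a compact level set.

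You are also missing a necessary case split. The level-set argument above only works when $\real^n\backslash\Cc$ is unbounded; if $\real^n\backslash\Cc$ is bounded (e.g.\ the complement of a ball), then $\partial\Cc$ is compact and no contradiction arises from properness alone --- there the nonexistence of a CLBF comes from the topological (Brockett-type) obstruction for bounded unsafe sets, which the paper handles by citing \cite{PB-CMK:20}, in the spirit of Proposition~\ref{prop:topological-obstruction}. You gesture at this obstruction but apply it to the wrong case (the unbounded-complement one). With the case split and the corrected consequence of \eqref{eq:empty-intersection}, the proof is two lines; as written, neither branch of your argument reaches a contradiction.
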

\begin{proof}
  If $\real^n\backslash\Cc$ is bounded, there does not exist a CLBF of
  $\real^{n}\backslash\Cc$ by~\cite{PB-CMK:20}. If
  $\real^n\backslash\Cc$ is unbounded, assume by contradiction that
  there exists a CLBF $\bar{V}$. As shown in~\cite[Remark
  13]{YM-YL-MF-JL:22}, condition~\eqref{eq:empty-intersection}
  requires that $\partial\Cc$ is the $0$-level set of $\bar{V}$.
  Indeed, if $x\in\partial\Cc$ is such that
    $\bar{V}(x)>0$, then there exists a sequence
    $\{ x_n \}_{n\in\mathbb{Z}_{>0}}$ converging to $x$ such that $\bar{V}(x_n)>0$ (and
    hence $x_n\notin\Uc$) and $x_n\in\Cc$ for all
    $n\in\mathbb{Z}_{>0}$.  This means that
    $x\in\overline{(\Cc\backslash\Uc)}\cap\overline{(\real^n\backslash\Cc)}$,
    which is impossible by condition~\eqref{eq:empty-intersection}.
  Finally, note that it is not possible for $\partial\Cc$ to be a
  $0$-level set of $\bar{V}$ because $\bar{V}$ is proper, implying that all
  of its level sets are compact.
\end{proof}
%
%

Next, we turn our attention to identifying conditions under which either a
CLBF or a compatible CLF-CBF pair exists provided that the origin is
safely stabilizable.

\begin{theorem}\longthmtitle{Converse result on safe
    stabilization}\label{thm:converse-safe-stab}
  Given a control system~\eqref{eq:control-sys}, let $\Cc$ be a set
  for which there exists a continuously differentiable function
  $h:\real^n\to\real$ satisfying~\eqref{eq:safe-set}.  Then,
  \begin{enumerate}
  \item\label{it:converse-safe-stab-clbf} if $\Cc$ is compact, $h$ is
    proper, and there exists a locally Lipschitz controller
    $u_{\text{str}}:\real^{n}\to\real^{m}$ such that
    $\nabla h(x)^{T} f(x,u_{\text{str}}(x))>0$ for all
    $x\in\partial\Cc$, as well as a stabilizing controller
    $u_{\text{st}}:\real^n\to\real^m$ such that the origin is
    asymptotically stable for the closed-loop system
    $\dot{x}=f(x,u_{\text{st}}(x))$ with region of
      attraction containing~$\Cc$, then there exists a CLBF of
    $\real^{n}\backslash\Cc$ and a strictly compatible CLF-CBF pair in
    $\Cc$;
  \item\label{it:converse-safe-stab-clf-cbf} if the origin is safely
    stabilizable on $\Cc$ with $u_{\text{ss}}$ a safe stabilizing
    controller, and either the condition in
    Theorem~\ref{thm:converse-cbf}~\ref{it:minus1} holds with 
    $u_* = u_{\text{ss}}$, the condition in
    Theorem~\ref{thm:converse-cbf}~\ref{it:zero} holds with
    $u_0 = u_{\text{ss}}$, the condition in
    Theorem~\ref{thm:converse-cbf}~\ref{it:second} holds with
    $\hat{u}=u_{\text{ss}}$, or the condition in
    Theorem~\ref{thm:converse-cbf}~\ref{it:third} holds, then there
    exists a compatible CLF-CBF pair on $\Cc$;
  \item\label{it:converse-safe-stab-clf-ecbf} if the origin is safely
    stabilizable on $\Cc$, there exists a compatible CLF-eCBF pair on
    $\Cc$.
  \end{enumerate}
\end{theorem}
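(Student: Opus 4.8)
Throughout I treat the three parts separately: parts~\ref{it:converse-safe-stab-clf-cbf} and~\ref{it:converse-safe-stab-clf-ecbf} follow quickly by combining the converse results of Section~\ref{sec:converse-sf} with a smooth converse Lyapunov theorem, while part~\ref{it:converse-safe-stab-clbf} requires an explicit construction of a CLBF.

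\emph{Parts~\ref{it:converse-safe-stab-clf-cbf} and~\ref{it:converse-safe-stab-clf-ecbf}.} The guiding idea is that the \emph{single} controller $u_{\text{ss}}$ can be made to witness simultaneously a CBF (resp.\ eCBF) inequality and a CLF inequality at every point of $\Cc$. For~\ref{it:converse-safe-stab-clf-cbf}, I would invoke Theorem~\ref{thm:converse-cbf} with the relevant controller chosen to be $u_{\text{ss}}$: in cases~\ref{it:zero} and~\ref{it:second} the proof of that theorem outputs a CBF of $\Cc$ for which $u_{\text{ss}}$ itself satisfies~\eqref{eq:cbf-ineq} on all of $\Cc$, and in case~\ref{it:third} ($\Cc$ compact) $h|_{\Cc}$ is a CBF and, since $u_{\text{ss}}$ renders $\Cc$ forward invariant, Nagumo's theorem gives $\nabla h(x)^Tf(x,u_{\text{ss}}(x))\ge0$ on $\partial\Cc$, so by compactness one can pick a class $\Kc_{\infty}$ function $\alpha$ with $\nabla h(x)^Tf(x,u_{\text{ss}}(x))\ge-\alpha(h(x))$ throughout $\Cc$; in every case there is a CBF for which $u_{\text{ss}}$ satisfies~\eqref{eq:cbf-ineq} everywhere in $\Cc$. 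A smooth converse Lyapunov theorem applied to $\dot x=f(x,u_{\text{ss}}(x))$ on the region of attraction of the origin then yields a smooth positive definite $V$, proper on a sublevel-set neighborhood $\Gamma\supseteq\Cc$, with $\nabla V(x)^Tf(x,u_{\text{ss}}(x))<0$ on $\Gamma\setminus\{0\}$; thus $V$ is a CLF on $\Gamma$ (with $W:=-\nabla V^Tf(\cdot,u_{\text{ss}})$), and choosing $u=u_{\text{ss}}(x)$ at each $x\in\Cc$ shows that $V$ and the CBF above form a compatible CLF-CBF pair. Part~\ref{it:converse-safe-stab-clf-ecbf} is identical, with Theorem~\ref{thm:converse-ecbf} in place of Theorem~\ref{thm:converse-cbf} (its construction again has the safe controller satisfying~\eqref{eq:ecbf-condition} on $\Cc$), so that only safe stabilizability of the origin is needed — which is why the weaker notion of eCBF appears in~\ref{it:converse-safe-stab-clf-ecbf}.

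\emph{Part~\ref{it:converse-safe-stab-clbf}.} I would proceed in three steps. (a)~First show that $u_{\text{str}}$ meets the hypothesis of Theorem~\ref{thm:converse-cbf}~\ref{it:zero}: since $\nabla h(x)^Tf(x,u_{\text{str}}(x))>0$ on the compact set $\partial\Cc$, the same holds on a collar neighborhood of $\partial\Cc$ in $\Cc$, so $h$ is strictly increasing along $f(\cdot,u_{\text{str}}(\cdot))$ there; hence for small $\delta>0$ the set $\{x\in\Cc: h(x)\ge\delta\}$ is forward invariant under $u_{\text{str}}$ and no trajectory of $\dot x=f(x,u_{\text{str}}(x))$ from $\Int(\Cc)$ ever gets closer to $\partial\Cc$ than its initial level. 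After truncating $u_{\text{str}}$ outside a compact neighborhood of $\Cc$ to make its image bounded, Theorem~\ref{thm:converse-cbf}~\ref{it:zero} then gives a CBF of $\Cc$ that $u_{\text{str}}$ satisfies on $\Cc$, and by the same continuity argument \emph{strictly} on $\partial\Cc$; after possibly replacing $h$ by an equivalent candidate CBF (e.g.\ one for which $0$ is a regular value and $\{0\le h<\delta\}$ is a tubular neighborhood of $\partial\Cc$ for small $\delta$), we may take this CBF to be $h$ itself. (b)~A smooth converse Lyapunov theorem for $\dot x=f(x,u_{\text{st}}(x))$ produces a smooth positive definite $V$, proper on a sublevel-set neighborhood $\Gamma\supseteq\Cc$, with $\nabla V(x)^Tf(x,u_{\text{st}}(x))<0$ on $\Gamma\setminus\{0\}$. (c)~The core of the proof is to \emph{blend} $V$ and $h$ into a CLBF. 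Fixing $c_0>\max_{\Cc}V$, set $\bar V(x)=(V(x)-c_0)\,g(h(x))$ on $\Cc$, where $g:\realnonneg\to\realnonneg$ is smooth with $g(0)=0$, $g'(0)>0$, $g$ strictly increasing on $[0,\epsilon_1]$ and constant thereafter, and extend $\bar V$ to $\real^n$ so that it is $\Cc^1$, lower bounded, proper, and $\bar V>0$ on $\real^n\setminus\Cc$ (routine, since along $\partial\Cc$ one has $\nabla\bar V=(V-c_0)g'(0)\nabla h$, a negative multiple of $\nabla h$). Then $\bar V\le0$ on $\Cc$ with equality exactly on $\partial\Cc$, so $\Uc=\Cc$ and~\eqref{eq:positive}--\eqref{eq:empty-intersection} hold; for~\eqref{eq:clbf-negative-derivative} I would split $\Cc\setminus\{0\}$ into the core $\{h\ge\epsilon_1\}$, where $g'\equiv0$ so $\nabla\bar V(x)^Tf(x,u_{\text{st}}(x))<0$, and the collar $\{0\le h<\epsilon_1\}$ (disjoint from a neighborhood of the origin if $\epsilon_1<h(0)$), where the term $(V-c_0)g'(h)\nabla h(x)^Tf(x,u_{\text{str}}(x))$ is strictly negative and, by making $g$ steep near $0$ — so that $g'/g$ exceeds on $(0,\epsilon_1)$ the constant bounding $|\nabla V^Tf(\cdot,u_{\text{str}})|/(|V-c_0|\,\nabla h^Tf(\cdot,u_{\text{str}}))$ there, finite by compactness — dominates the remaining term $g(h)\nabla V(x)^Tf(x,u_{\text{str}}(x))$; this gives $\inf_u\nabla\bar V(x)^Tf(x,u)<0$ on all of $\Cc\setminus\{0\}$. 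Having the CLBF $\bar V$, I would obtain via~\cite[Proposition~3]{MZR-BJ:16} (adapted to the present setting) a smooth safe stabilizing controller $u^*$, which satisfies $\nabla h(x)^Tf(x,u^*(x))>0$ on $\partial\Cc$ because $\nabla\bar V$ is aligned with $-\nabla h$ there; a converse Lyapunov theorem for $u^*$ gives a CLF $V^*$, and by Theorem~\ref{thm:converse-cbf}~\ref{it:third} $h|_{\Cc}$ is a CBF whose inequality $u^*$ satisfies (with the class $\Kc_{\infty}$ function enlarged so that the inequality becomes strict in the interior as well, and strict on $\partial\Cc$ by the above). Hence $(V^*,h)$ is a strictly compatible CLF-CBF pair on $\Cc$.

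The main obstacle is step~(c): $V$ knows about the origin but not the safe set, $h$ knows about the safe set but not the origin, and in general neither $u_{\text{st}}$ nor $u_{\text{str}}$ satisfies the other's decrease condition, so the two certificates must be fused into a single $\Cc^1$ function whose decrease condition is witnessed — by possibly a different control at different points — everywhere in $\Cc\setminus\{0\}$. Making the signs of the two terms in $\nabla\bar V^Tf$ cooperate in the transition region, calibrating the growth of $g$, and dealing with boundary points where $\nabla h$ is small or $\{h<\delta\}$ fails to be a collar of $\partial\Cc$ (handled by the preliminary replacement of $h$) is precisely where compactness of $\Cc$ and the freedom in choosing the (extended) class $\Kc_{\infty}$ function are indispensable.
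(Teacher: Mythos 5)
Parts \ref{it:converse-safe-stab-clf-cbf} and \ref{it:converse-safe-stab-clf-ecbf} of your proposal coincide with the paper's argument: the single controller $u_{\text{ss}}$ witnesses the CLF inequality (converse Lyapunov theorem applied to $\dot x=f(x,u_{\text{ss}}(x))$) and the CBF/eCBF inequality (Theorems~\ref{thm:converse-cbf} and~\ref{thm:converse-ecbf}, whose constructions are witnessed by the safe controller itself), so compatibility is pointwise witnessed by $u_{\text{ss}}(x)$. For part \ref{it:converse-safe-stab-clbf} you replace the paper's additive blend ($\bar V=-h+\Psi-\tfrac{\epsilon}{2}$, with $\Psi$ a smoothed truncation of $\tfrac{1}{\lambda}V+h$ that is constant outside a core set $\Pi$) by a multiplicative blend $\bar V=(V-c_0)\,g(h)$. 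That is a legitimate alternative architecture, but the calibration of $g$ as you state it cannot be carried out.

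Concretely: you require $g$ to be smooth, strictly increasing on $[0,\epsilon_1]$, constant on $[\epsilon_1,\infty)$, \emph{and} to satisfy $g'/g>M$ on all of $(0,\epsilon_1)$, where $M$ bounds $\nabla V^Tf(\cdot,u_{\text{str}})/\big((c_0-V)\,\nabla h^Tf(\cdot,u_{\text{str}})\big)$ on the collar. Smoothness together with constancy past $\epsilon_1$ forces $g'(h)\to0$ as $h\to\epsilon_1^-$ while $g(h)\to g(\epsilon_1)>0$, hence $g'/g\to0$, contradicting $g'/g>M>0$. In the outer part of the collar the $u_{\text{str}}$-term therefore does not dominate and \eqref{eq:clbf-negative-derivative} is not established there. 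The repair is to split the collar: enforce $g'/g>M$ only on $(0,\epsilon_1/2]$, where you use $u_{\text{str}}$, and on $[\epsilon_1/2,\epsilon_1)$ make $g'$ uniformly small so that the strictly negative term $g(h)\nabla V^Tf(x,u_{\text{st}})$ (bounded away from zero there, since $g\geq g(\epsilon_1/2)>0$ and this region excludes the origin) dominates $(c_0-V)g'(h)\,|\nabla h^Tf(x,u_{\text{st}})|$; this two-zone structure is exactly what the paper achieves with the cutoff set $\Pi$ at level $\tfrac{\epsilon}{2}$. A second, smaller issue: you obtain the strictly compatible CLF-CBF pair by first extracting a smooth safe stabilizing controller $u^*$ from the CLBF via~\cite[Proposition~3]{MZR-BJ:16} and then applying a converse Lyapunov theorem to it. That universal-formula step is only available for control-affine dynamics, and Remark~\ref{rem:existence-locally-Lipschitz-safe-controllers} shows continuous selections can fail otherwise; since strict compatibility is a pointwise condition, it is both simpler and safer to set $\hat V=\bar V-\bar V(0)$ and $\hat h=-\bar V$ and let, at each $x$, any $u$ with $\nabla\bar V(x)^Tf(x,u)<0$ witness both inequalities strictly, as the paper does.
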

\begin{proof}
  \textbf{We first show \ref{it:converse-safe-stab-clbf}.}  Note that
  $u_{\text{str}}$ is a safe controller.  Since for all
  $x\in\partial\Cc$, $\nabla h(x)^{T}f(x,u_{\text{str}}(x))>0$, $\Cc$
  is compact, and $\nabla h(x)^T f(x,u_{\text{str}}(x))$ is continuous
  as a function of $x$, there exists $\epsilon>0$ such that
  $\nabla h(x)^{T}f(x,u_{\text{str}}(x))>0$ over
  $\Tc:=\setdef{x\in\real^n}{0\leq h(x)\leq\epsilon}$ and such that
  $\mathbf{0}_n\notin\Tc$ (this is possible because by assumption, 
  $\mathbf{0}_n\in\Int(\Cc)$). Since the
  origin is asymptotically stable for the closed-loop system
  $\dot{x}=f(x,u_{\text{st}}(x))$ with region of attraction containing
  an open set containing $\Cc$, and since the region of attraction 
  is an open set~\cite[Lemma 8.1]{HK:02}, by~\cite[Theorem~4.17]{HK:02}, this
  implies that there exists a CLF $V$ on an open set containing~$\Cc$
  and, furthermore, $\nabla V(x)^T f(x, u_{\text{st}}(x)) < 0$ for all
  $x\in\Cc\backslash\{ \mathbf{0}_n \}$. Since $V$ and $h$ are continuous, $\Cc$
  is compact and
  $\setdef{x\in\real^n}{h(x)=\frac{\epsilon}{2}} \subset \Tc$, there
  exists $\lambda>0$ sufficiently large such that
  $\setdef{x\in\real^n}{\frac{1}{\lambda}V(x)+h(x)=\frac{\epsilon}{2}}
  \cap \Tc \neq \emptyset$.  Let
  $\Pi =
  \setdef{x\in\real^n}{\frac{1}{\lambda}V(x)+h(x)\geq\frac{\epsilon}{2}
  }\cap\Cc$.  Figure~\ref{fig:CLBF-diagram} illustrates the different
  sets defined up to this point.
  %
      %
  Since
  $\setdef{x\in\real^n}{\frac{1}{\lambda}V(x)+h(x)=\frac{\epsilon}{2}}
  \cap \Tc \neq \emptyset$, it follows that $\Pi$ is nonempty, is
  contained in $\Cc$, and is compact.
      %
  %
  Note that $\mathbf{0}_n\in\Pi$ (because $\mathbf{0}_n\in\Cc\backslash\Tc$). Now, define
  \begin{align*}
    \tilde{V}(x)
    =
    \begin{cases}
      \frac{1}{\lambda}V(x)+h(x) \quad &\text{if} \quad  x\in\Pi,
      \\
      \frac{\epsilon}{2} \quad &\text{else}.
    \end{cases}
  \end{align*}
  Recall that $\nabla V(x)^T f(x, u_{\text{st}}(x))<0$ for all
  $x\in\text{Int}(\Pi)\backslash\{ \mathbf{0}_n \}$.  By smoothing $\tilde{V}$
  using the smoothing argument in the proof of
  Theorem~\ref{thm:converse-cbf}~\ref{it:zero}, there exists a smooth
  function $\Psi:\real^{n}\to\real$ such that
  \begin{align*}
    \nabla \Psi(x)^T f(x, u_{\text{st}} (x))
    &\leq \big(
      \frac{1}{\lambda} \nabla V(x) + \nabla h(x) )^T f(x,
      u_{\text{st}} (x) \big)
    \\
    & - \frac{1}{2\lambda} \nabla V(x)^T f(x,u_{\text{st}}(x)),
  \end{align*}
  for all $x\in\text{Int}(\Pi)$, and $\Psi(x) =
  \frac{\epsilon}{2}$, $\nabla
  \Psi(x)=0$ for all
  $x\in\real^{n}\backslash\Int(\Pi)$.  Next, we show that $\bar{V}(x)
  =
  -h(x)+\Psi(x)-\frac{\epsilon}{2}$ is a CLBF of
  $\real^n\backslash\Cc$.  First, note that
  $\bar{V}$ is proper because $h$ is proper and $\bar{V}(x) =
  -h(x)$ for all
  $x\in\real^n\backslash\text{Int}(\Pi)$, $\bar{V}(x)>0$ for all $x
\in\real^{n}\backslash\Cc$, and hence~\eqref{eq:positive}
holds. Moreover, for
$x\in\Cc\backslash\Int(\Pi)$, since
$\nabla\bar{V}\equiv0$ and
$\Cc\backslash\Int(\Pi)\subset\Tc$, it follows that $\nabla
\bar{V}(x)^T f(x,u_{\text{str}}(x)) = -\nabla h(x)^{T}
f(x,u_{\text{str}}(x))<0$.  For $x\in\Int(\Pi)\backslash\{ \mathbf{0}_n \}$,
  \begin{align*}
    &\nabla\bar{V}(x)^T f(x,u_{\text{st}}(x))=(-\nabla h(x) +\nabla\Psi(x))^T
      f(x,u_{\text{st}}(x))
    \\
    &\quad \leq \frac{1}{2\lambda}\nabla V(x)^T f(x,u_{\text{st}} (x))<0.
  \end{align*}
  Hence,~\eqref{eq:clbf-negative-derivative} holds.  Moreover, note
  that $\Cc\backslash\Pi\neq\emptyset$
  and $\bar{V}(x)=-h(x)<0$ in $\Cc\backslash\Pi$. Hence,
  $\Uc:=\setdef{x\in\real^n}{\bar{V}(x)\leq0}\neq\emptyset$
  and~\eqref{eq:nonempty-sublevel-set} holds.  Moreover, since again
  $\bar{V}(x)=-h(x)$ in $\Cc\backslash\Pi$, $\Cc\backslash\Uc\subset\Pi$.
  This means that
  $\overline{(\Cc\backslash\Uc)}\cap\overline{(\real^n\backslash\Cc)}=\emptyset$
  and~\eqref{eq:empty-intersection} holds.  This shows that $\bar{V}$ is
  a CLBF of $\real^{n}\backslash\Cc$.
      %
  \begin{figure}[htb]
    \centering
    \includegraphics[width=0.45\textwidth]{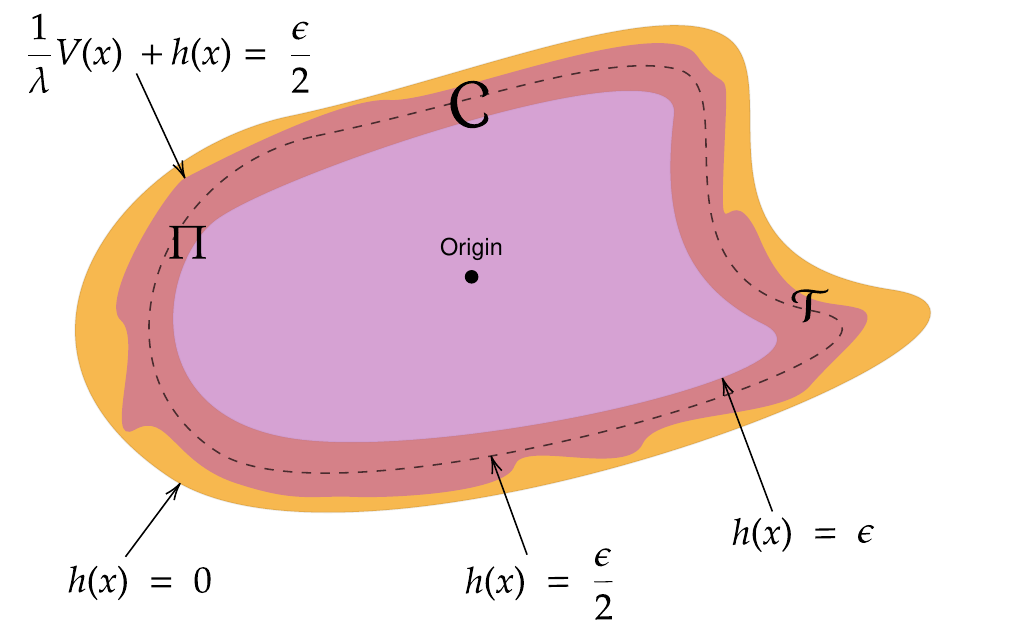}
    \caption{Illustration of the different sets defined in the proof
      of
      Theorem~\ref{thm:converse-safe-stab}\ref{it:converse-safe-stab-clbf}.
      The set
        $\Cc$ is the union of the orange, dark purple and light purple
        regions.  The set
        $\Tc$ is the union of the orange and dark purple regions, and
        the set
        $\Pi$ is the union of the dark and light purple regions.
      }\label{fig:CLBF-diagram}
  \end{figure}
  
  Next, let us show that there exists a strictly compatible CLF-CBF
  pair in $\Cc$.  We do so by using the CLBF
  $\bar{V}$.  Since
  $\bar{V}$ is lower-bounded, it achieves its minimum value at a point
  $p$.  Note that
  $p$ must be the origin, because otherwise,
  by~\eqref{eq:clbf-negative-derivative}
  $\nabla\bar{V}(p)\neq0$, which would mean that
  $p$ is not a local minimum.  Let
  $\hat{V}(x)=\bar{V}(x)-\bar{V}(0)$. Note that
  $\hat{V}$ is proper, positive definite and, for each
  $x\in\Cc$, there exists a control
  $u\in\real^{m}$ satisfying~\eqref{eq:clf-ineq} strictly. Indeed,
  this follows by considering a controller
  $\breve{u}:\real^n\to\real^m$ satisfying $\nabla \bar{V}(x)^T
  f(x,\breve{u}(x))<0$ for all
  $x\in\Cc\backslash\{ \mathbf{0}_n \}$ and taking $W(x):=-\frac{1}{2}\bar{V}(x)^T
  f(x,\breve{u}(x))$ for $x\neq\mathbf{0}_n$ and
  $W(0)=0$ in Definition~\ref{def:clf}. Therefore,
  $\hat{V}$ is a CLF. Now, let
  $\hat{h}(x)=-\bar{V}(x)$. It is easy to check that
  $\hat{h}$ is a CBF of $\Cc$ because it is a candidate CBF of $\Cc$ and 
  $\Cc$ is compact (cf. Theorem~\ref{thm:compact-converse-cbf}).
  Now,~\eqref{eq:clf-ineq} and~\eqref{eq:cbf-ineq} read as
  \begin{subequations}
    \begin{align}
      \nabla \bar{V}(x)^T f(x,u)+W(x)\leq0 ,
      \\
      -\nabla \bar{V}(x)^T f(x,u)+\alpha(-\bar{V}(x))\geq0.
    \end{align}
    \label{eq:clbf-compat}
  \end{subequations}
  Now note that $\breve{u}(x)$ satisfies~\eqref{eq:clbf-compat}
  strictly for all $x\in\Cc$. Hence, $\hat{V}$ and $\hat{h}$ are a
  strictly compatible CLF-CBF pair.

  \textbf{To show \ref{it:converse-safe-stab-clf-cbf} and
    \ref{it:converse-safe-stab-clf-ecbf}, we reason as follows.}
  Since $u_{\text{ss}}$ is a safe stabilizing controller,
  $\Cc$ is forward invariant and the origin is asymptotically stable
  for the closed-loop system $\dot{x}=f(x,u_{\text{ss}}(x))$ with
  $\Cc$ contained in an open set contained in its region of
  attraction. By~\cite[Theorem 4.17]{HK:02}, there exists a Lyapunov
  function $V$ for the closed-loop system
  $\dot{x}=f(x,u_{\text{ss}}(x))$, and
  $u_{\text{ss}}$ satisfies~\eqref{eq:cbf-ineq}.
  Now, if condition~\ref{it:zero} in Theorem~\ref{thm:converse-cbf}
  holds with $u_0 = u_{\text{ss}}$, there exists a CBF $h^*$ of
  $\Cc$. As shown in the proof of
  Theorem~\ref{thm:converse-cbf}\ref{it:zero},
  $u_{\text{ss}}$ satisfies the associated CBF
  condition~\eqref{eq:cbf-ineq} for all
  $x\in\Cc$.  Similarly, if condition~\ref{it:second} in
  Theorem~\ref{thm:converse-cbf} holds with $\hat{u} =
  u_{\text{ss}}$, there exists a CBF $h^*$ of
  $\Cc$.  As shown in the proof of
  Theorem~\ref{thm:converse-cbf}\ref{it:second},
  $u_{\text{ss}}$ satisfies the associated CBF
  condition~\eqref{eq:cbf-ineq} for all
  $x\in\Cc$.  Finally, if condition~\ref{it:third} in
  Theorem~\ref{thm:converse-cbf} holds, there exists a CBF $h^*$ of
  $\Cc$, and as shown in the proof of~\cite[Proposition
  3]{ADA-XX-JWG-PT:17}, any safe controller (in particular,
  $u_{\text{ss}}$) satisfies~\eqref{eq:cbf-ineq} for an appropriately
  defined extended class $\Kc_{\infty}$ function
  $\alpha$.  Hence, for every $x\in\Cc$,
  $u_{\text{ss}}(x)$ satisfies inequalities~\eqref{eq:clf-ineq}
  and~\eqref{eq:cbf-ineq}, which means that $V$ and
  $h^*$ are compatible, showing \ref{it:converse-safe-stab-clf-cbf}.

  Moreover, since $\Cc$ is safe under $u_{\text{ss}}$,
  Theorem~\ref{thm:converse-ecbf} implies that there exists an eCBF
  $\hat{h}$ of $\Cc$. Moreover, as shown in the proof of
  Theorem~\ref{thm:converse-ecbf}, any locally Lipschitz safe
  controller (in particular, $u_{\text{ss}}$)
  satisfies~\eqref{eq:ecbf-condition} for all $x\in\Cc$.  Since
  $u_{\text{ss}}(x)$ satisfies~\eqref{eq:clf-ineq}
  and~\eqref{eq:ecbf-condition} simultaneously, $V$ and
  $\hat{h}$ are a compatible CLF-eCBF pair, showing
  \ref{it:converse-safe-stab-clf-ecbf}.
\end{proof}

Theorem~\ref{thm:converse-safe-stab}~\ref{it:converse-safe-stab-clbf}
is consistent with Proposition~\ref{prop:topological-obstruction},
because it only ensures the existence of a CLBF if $\Cc$ is compact.

%
%

\begin{remark}\longthmtitle{On CLBFs and compatible pairs}
  {\rm 
  It is worth noting how Theorem~\ref{thm:converse-safe-stab}(i)-(iii)
  provide existence results under decreasingly restrictive
  hypotheses. In fact, the conditions in
  Theorem~\ref{thm:converse-safe-stab} under which a CLBF is
  guaranteed to exist always guarantee the existence of a compatible
  CLF-CBF pair, but the converse does not hold. To see this, note that
  from Proposition~\ref{prop:no-clbf-unbounded} and
  Theorem~\ref{thm:converse-safe-stab}, that unbounded sets containing
  a safely stabilizable point do not admit a CLBF, but they can admit
  a compatible CLF-CBF pair if either of the conditions in
  Theorem~\ref{thm:converse-cbf}\ref{it:zero},~\ref{it:second},
  or~\ref{it:third} hold.  Instead, compact safe sets that contain a
  safely stabilizable point and satisfy the strict inequality
  condition in
  Theorem~\ref{thm:converse-safe-stab}~\ref{it:converse-safe-stab-clbf}
  for some controller admit both a CLBF and a compatible CLF-CBF pair
  (because if the safe set is compact the assumptions of
  Theorem~\ref{thm:converse-safe-stab}~\ref{it:converse-safe-stab-clf-cbf}
  hold).  \demo
  }
\end{remark}




Next we address problem (P3) in
Section~\ref{sec:problem-statement}. 
%
The following example shows that in general, even if there exists a
CBF of $\Cc$ and a CLF on an open set containing $\Cc$, there might
not exist a strictly compatible CLF-CBF pair in $\Cc$.

\begin{example}\longthmtitle{Safety and stability
    separately do not imply safe
    stabilization}\label{ex:no-safe-stab}
  {\rm
  Consider the control-affine system:
  \begin{subequations}\label{eq:counterex-safe-stab}
    \begin{align}
      \dot{x}&=-x u,
      \\
      \dot{y}&=-y u.
    \end{align}
  \end{subequations}
  Let $h:\real^{2}\to\real$ be a differentiable function and $\Vc$ a
  neighborhood of $p=(0,5)$ with the following properties:
  \begin{itemize}
  \item $h(x,y)=1-(x+1)^{2}-(y-5)^{2}$ in $\Vc$,
  \item $h(0,0)>0$,
  \item $\setdef{x\in\real^n}{h(x)\geq0}$ is compact.
  \end{itemize}
  Let $\Cc$ be defined as in~\eqref{eq:safe-set}. The controller
  $u_{\text{sf}}:\real^{2}\to\real$ given by $u_{\text{sf}}(x,y)=0$
  for all $(x,y)\in\real^{2}$ renders the set $\Cc$ safe.  Since $\Cc$
  is compact, by Theorem~\ref{thm:converse-cbf}~\ref{it:third} it
  follows that $h$ is a CBF of $\Cc$.  Moreover, the origin is
  globally asymptotically stabilizable, since the controller
  $u_{\text{st}}:\real^2\to\real$ given by $u_{\text{st}}(x,y)=1$ for
  all $(x,y)\in\real^{2}$ makes the origin globally asymptotically
  stable.  Moreover, $V(x,y) = \frac{1}{2}(x^2 + y^2)$ is a CLF in
  $\real^2$.  However, any locally Lipschitz controller
  $\hat{u}:\real^{2}\to\real$ such that $\hat{u}(0,5)\neq0$ steers the
  trajectory starting at the point $(0,5)$ away from $\Cc$. Indeed,
  note that the $y$ axis is forward invariant and hence $x(t)=0$ for
  all $t\geq0$.  Moreover, the solution
  of~\eqref{eq:counterex-safe-stab} is differentiable and by
  performing a Taylor expansion of first order, for time
  $0<\epsilon\ll1$, the solution of~\eqref{eq:counterex-safe-stab}
  satisfies
  \begin{align*}
    y(\epsilon)=5-5u(p)\epsilon+O(\epsilon^2). 
  \end{align*}
  This implies that
  \begin{align*}
    h(x(\epsilon),y(\epsilon))=-25u(p)^2\epsilon^2+O(\epsilon^3),
  \end{align*}
  and therefore $h(x(\epsilon),y(\epsilon))<0$ for small enough
  $\epsilon$. Hence, there does not exist a safe stabilizing
  controller in $\Cc$. Therefore, even though $h$ is a CBF of $\Cc$
  and $V$ is a CLF in $\real^2$, there does not exist a strictly
  compatible CLF-CBF pair. Indeed, if that were the case the control
  design provided in~\cite{PO-JC:19-cdc} would yield a safe
  stabilizing controller, which does not exist.  Note that this
  example does not preclude the existence of a compatible CLF-CBF
  pair. However, even if such a compatible pair exists, one would not
  be able to use it to obtain a safe stabilizing controller.
  \problemfinal
  }
\end{example}


Note that the cause of difficulty in Example~\ref{ex:no-safe-stab} is
the point $p=(0,5)$, which is such that $\nabla h(x)^T f(x,u)=0$ for
any $u\in\real^{m}$.  Instead, using
Theorem~\ref{thm:converse-safe-stab}~\ref{it:converse-safe-stab-clbf},
we know that if there exists a locally Lipschitz controller
$u_{\text{str}}:\real^n\to\real^m$ such that
$\nabla h(x)^T f(x,u_{\text{str}}(x)) > 0$ for all $x\in\partial\Cc$,
$\Cc$ is compact and there exists a stabilizing controller with
region of attraction containing the safe set, then
there exists a strictly compatible CLF-CBF pair. Note
  that the proof of
  Theorem~\ref{thm:converse-safe-stab}~\ref{it:converse-safe-stab-clbf}
  heavily relies on the compactness of~$\Cc$.
  Next, we provide a similar result for non-compact $\Cc$ but
  restricted to control-affine systems.

\begin{proposition}\longthmtitle{Existence of compatible CLF-eCBF
    pair}\label{prop:compatible-clf-ecbf}
  Given an open set $\Gamma$ such that $\Cc\subseteq\Gamma$, let $h$
  be an eCBF of $\Cc$ with $0$ as a regular value and $V$ be a CLF on
  $\Gamma$. Further assume that the dynamics are control-affine, so
  that $\dot{x}=a(x)+g(x)u$, with $a:\real^{n}\to\real^{n}$ and
  $g:\real^{n}\to\real^{m}$ locally Lipschitz. Let
  $\Pc:=\setdef{x\in\real^n}{L_gV(x)=\kappa L_gh(x), \
    \kappa>0}$. Assume that
  $\Pc \cap \partial\Cc$ is contained in
  \begin{align*}
    \setdef{x\in\partial\Cc}{L_gh(x)\neq\mathbf{0}_m, \
    L_aV(x)<\frac{L_gV(x)^T L_gh(x)}{\norm{L_gh(x)}^2}L_ah(x)} .    
  \end{align*}
  Then, there exists a compatible CLF-eCBF pair in $\Cc$ with $0$ a
  regular value of the eCBF.
\end{proposition}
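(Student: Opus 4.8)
The plan is to keep $V$ and $h$ themselves, \emph{shrinking} the decrease rate of $V$ and \emph{enlarging} the extended class $\Kc_{\infty}\Kc$ function of $h$ so that the two become compatible on $\Cc$; since $0$ is a regular value of $h$ by hypothesis, the statement about the regular value comes for free. Fix a positive definite $W_0$ witnessing that $V$ is a CLF on $\Gamma$, and a continuous extended class $\Kc_{\infty}\Kc$ function $\alpha_0$ witnessing that $h$ is an eCBF of $\Cc$.

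\emph{Reduction to a pointwise inequality.} For control-affine dynamics, at each $x\in\Cc$ the CLF inequality~\eqref{eq:clf-ineq} and the eCBF inequality~\eqref{eq:ecbf-condition} describe the half-spaces $\{u:L_gV(x)^{\transpose}u\le -W(x)-L_aV(x)\}$ and $\{u:-L_gh(x)^{\transpose}u\le\alpha(h(x),\norm{x})+L_ah(x)\}$ in $u$. Two nonempty half-spaces fail to intersect only when their normals are nonzero and point in opposite directions with incompatible offsets, and here $L_gV(x)$ and $-L_gh(x)$ are nonzero and anti-aligned precisely when $x\in\Pc$ and $L_gh(x)\neq0$. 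One checks directly that $x=0$ is compatible (because $\nabla V(0)=0$), and that a point $x\in\Pc$ with $L_gh(x)=0$ — hence $L_gV(x)=0$ — is compatible from the definitions of CLF and eCBF. Thus, with $\kappa(x):=L_gV(x)^{\transpose}L_gh(x)/\norm{L_gh(x)}^2>0$, the pair $(V,h)$ is compatible at $x$ unless $x\in\Pc\cap\{L_gh\neq0\}$ and then it is compatible at $x$ iff
\[
  \kappa(x)\,\alpha(h(x),\norm{x})\ \ge\ W(x)+L_aV(x)-\kappa(x)\,L_ah(x).
\]

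\emph{The boundary.} On $\partial\Cc$ we have $h=0$, so $\alpha(h(x),\norm{x})=0$ and the inequality reads $W(x)\le\kappa(x)L_ah(x)-L_aV(x)$. By hypothesis $\Pc\cap\partial\Cc$ is a closed set not containing the origin on which $\kappa(\cdot)L_ah(\cdot)-L_aV(\cdot)$ is continuous and strictly positive, so one can pick a positive definite $W\le W_0$, with $W<W_0$ off the origin, bounded above on $\Pc\cap\partial\Cc$ by this function. \emph{The interior.} On $\Pc\cap\Int(\Cc)\cap\{L_gh\neq0\}$ set $\Phi(x):=(W(x)+L_aV(x))/\kappa(x)-L_ah(x)$ and
\[
  \bar\Phi(r,c):=\sup\{\Phi(x):x\in\Pc\cap\Int(\Cc),\ L_gh(x)\neq0,\ h(x)\le r,\ \norm{x}\le c\}.
\]
The crux of the argument is that $\bar\Phi(r,c)<\infty$ for all $r,c>0$: on the compact set $\{x\in\Cc:h(x)\le r,\ \norm{x}\le c\}$, $\Phi$ can diverge only along a sequence $x_k$ with $W(x_k)+L_aV(x_k)>0$ and $\kappa(x_k)\to0$, i.e. $L_gV(x_k)\to0$; a limit point $x_*$ then has $L_gV(x_*)=0$, so for $x_*\neq0$ the CLF property forces $L_aV(x_*)\le-W_0(x_*)$, whence $W(x_*)+L_aV(x_*)<0$, contradicting positivity along the sequence (the case $x_*=0$ requires a separate argument exploiting $\nabla V(0)=0$ and the control-affine structure at the origin). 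Since $\bar\Phi$ is non-decreasing in each argument and, by the choice of $W$, $\limsup_{r\to0^+}\bar\Phi(r,c)\le0$ for every $c$, one can choose a continuous extended class $\Kc_{\infty}\Kc$ function $\alpha\ge\max(\alpha_0,\bar\Phi)$ exactly as $\alpha$ is constructed in the proof of Theorem~\ref{thm:converse-ecbf}; then $\alpha(h(x),\norm{x})\ge\Phi(x)$ on $\Pc\cap\Int(\Cc)\cap\{L_gh\neq0\}$, which is the inequality needed above.

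Putting the pieces together, with $W$ as above and $\alpha$ as above, $V$ is a CLF on $\Gamma$ with decrease rate $W$, $h$ is an eCBF of $\Cc$ with gain $\alpha$ and $0$ as a regular value, and the case analysis shows $(V,h)$ is compatible at every $x\in\Cc$. The main obstacle is the interior step — showing the required decrease-versus-growth ratio $\bar\Phi$ stays finite where $L_gV$ degenerates — and this is precisely where the CLF structure (forcing $L_aV+W_0\le0$ on $\{L_gV=0\}$) and the behavior at the origin are essential; the boundary hypothesis on $\Pc\cap\partial\Cc$ is used only to make the $r\to0^+$ limit nonpositive, so that $\alpha$ may vanish at $h=0$ as an extended class $\Kc_{\infty}\Kc$ function must.
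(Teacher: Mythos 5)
Your overall strategy is the same as the paper's: reduce compatibility to a pointwise half-space intersection in $u$, observe that it can only fail on $\Pc\cap\{L_gh\neq0\}$ where it is equivalent to $\kappa\,\alpha(h,\norm{x})\geq W+L_aV-\kappa L_ah$, use the boundary hypothesis to control the $h\to0$ limit, and absorb the remaining obstruction into an enlarged class $\Kc_{\infty}\Kc$ gain defined as a supremum over $\{0\le h\le r,\ \norm{x}\le c\}$. The execution differs in one respect: the paper first excises a neighborhood $\Tc$ of $\partial\Cc$ (compatibility there by continuity of the boundary hypothesis) and a neighborhood $\bar{\Sc}$ of the zero set of $L_gV^TL_gh$, and takes the supremum only over the remaining compact set where that product is bounded away from zero; you keep the supremum over all of $\Pc\cap\Int(\Cc)$ and pay for it with a limit-point argument plus the device of shrinking $W$ strictly below $W_0$ off the origin. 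Your argument for limit points $x_*\neq0$ (using that $L_gV(x_*)=0$ forces $L_aV(x_*)\le-W_0(x_*)$) is correct and is the right way to see why the CLF structure matters.

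The genuine gap is the case $x_*=0$, which you flag but defer; it cannot be closed in the generality you are working in, because for the \emph{given} pair $(V,h)$ the required bound can actually fail there. Take $\dot{x}=-x^3+y$, $\dot{y}=u$, $V(x,y)=\tfrac12 x^2+\tfrac14 y^4$ (a CLF with $W_0=\tfrac12(x^4+y^6)$, since $L_gV=y^3$ vanishes only on $\{y=0\}$, where $L_aV=-x^4$), and $h(x,y)=1+y-\tfrac{1}{100}(x^2+y^2)$, whose zero superlevel set is a compact disk containing the origin, with $0$ a regular value; one checks that $h$ is an eCBF and that $\Pc\cap\partial\Cc$ (the arc of $\partial\Cc$ with $0<y<50$) satisfies the boundary hypothesis because $L_aV=x(y-x^3)<0$ there. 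Yet along $(t,2t^3)$ with $t\to0^+$ one has $L_aV=t^4>0$ and $L_gV=8t^9$, so the CLF inequality forces $L_gh\,u\le -1/(8t^5)+o(1)$ while the eCBF inequality forces $L_gh\,u\ge -\alpha(1,0)+o(1)$: no choice of positive definite $W$ and class $\Kc_{\infty}\Kc$ function $\alpha$ makes this pair compatible near the origin. So the ``separate argument at the origin'' would have to replace $V$ itself, not merely shrink $W$ and enlarge $\alpha$, which is a different proof from the one you (and the paper) are running. For context, the paper's own proof has the same blind spot: its claim that $L_gV^TL_gh\ge\iota_{r,c}>0$ on $S_{r,c}\backslash(\Tc\cup\bar{\Sc})$ breaks down when the zero set of $L_gV^TL_gh$ accumulates at the origin, which is unavoidable since $\nabla V(0)=0$.
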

%
%
\begin{proof} The proof relies on the characterization
    of compatibility for a CLF and a CBF, as provided in~\cite[Lemma
    5.2]{PM-JC:23-csl}, and which naturally extends to CLFs and eCBFs.
    This result says that $V$ and $h$ are compatible at $x\in\Cc$ if
    and only if $L_gV(x)$ and $L_gh(x)$ are linearly dependent,
    $L_gV(x)^T L_gh(x) > 0$ and
    $L_fV(x) + W(x) > \frac{ L_gV(x)^T L_gh(x) }{ \norm{L_gh(x)}^2 }(
    L_fh(x)+\alpha(h(x)) )$.
  By continuity of $L_{g}V$ and $L_{g}h$, there exists a neighborhood
  $\Tc$ of $\partial\Cc$ such that
  $\Pc\cap\Tc\subseteq\setdef{x\in\real^n}{L_gh(x)\neq\mathbf{0}_m, \
    L_aV(x)<\frac{L_gV(x)^T
    L_gh(x)}{\norm{L_gh(x)}^2}L_ah(x)}$. By~\cite[Lemma
  5.2]{PM-JC:23-csl}, $V$ and $h$ are compatible in $\Cc\cap\Tc$.
  Next, define
  $\Sc=\setdef{x\in\Cc\backslash(\Tc\cup\{ \mathbf{0}_n \})}{L_gV(x)^T
    L_gh(x)=\mathbf{0}_m}$. Given $y\in\Sc$, if $L_gV(y)$ and $L_{g}h(y)$ are
  linearly independent, $V$ and $h$ are compatible at $y$
  (cf.~\cite[Lemma 5.2]{PM-JC:23-csl}). If instead $L_{g}V(y)$ and
  $L_{g}h(y)$ are linearly dependent, $L_{g}V(y)=L_{g}h(y)=\mathbf{0}_m$ and $V$
  and $h$ are compatible at $y$ because $V$ is a CLF and $h$ an eCBF.
  Moreover, there exists a neighborhood $\bar{\Sc}$ of $\Sc$ such that
  $V$ and $h$ are compatible in $\bar{\Sc}$. This is because for any
  $y\in\Sc$, if $L_gV(y)$ and $L_gh(y)$ are linearly independent,
  there exists a neighborhood $\Vc(y)$ of $y$ where $L_{g}V(z)$ and
  $L_{g}h(z)$ are linearly independent for all $z\in\Vc(y)$, and hence
  by~\cite[Lemma 5.2]{PM-JC:23-csl}, $V$ and $h$ are compatible at
  $z$. If instead $L_gV(y)$ and $L_gh(y)$ are linearly dependent, we
  can assume without loss of generality that $L_ah(y)+\alpha(h(y))>0$
  and $L_aV(y)+W(y)<0$ (otherwise, define
  $\tilde{\alpha}(s):=\frac{1}{2}\alpha(s)$ and
  $\tilde{W(x)}=\frac{1}{2}W(x)$), hence making $V$ and $h$ compatible
  in a neighborhood of $y$.  Now, we only need to show that $V$ and
  $h$ are compatible at $\Cc\backslash(\Tc\cup\bar{\Sc})$.  Define
  $S_{r,c}:=\setdef{x\in\real^n}{0\leq h(x)\leq r, \ \norm{x}\leq
    c+c_{\min}}$, where $c_{\min}$ is taken so that
  $S_{0,0}\neq\emptyset$, and define $\alpha$ as follows:
  \begin{align*}
    \alpha(r,c):= \sup_{x\in S_{r,c}\backslash (\Tc\cup\bar{\Sc})}
    \Bigl\{ (L_aV(x)+&W(x))\frac{\norm{L_gh(x)}^2}{L_gV(x)^T L_gh(x)}
    \\
                     &- L_ah(x) \Bigr\}.
  \end{align*}
  Since $S_{r,c}\backslash (\Tc\cup\bar{\Sc})$ is bounded for all
  $r\geq0$, $c\geq0$, and there exists a positive constant
  $\iota_{r,c}>0$ such that $L_gV(x)^T L_gh(x) > \iota_{r,c}$ for all
  $x\in S_{r,c}$, $\alpha(r,c)$ is finite for all $r\geq0$, $c\geq0$.
  Hence, there exists a class $\Kc_{\infty}\Kc$ function
  $\hat{\alpha}$ such that
  $\hat{\alpha}(h(x),\norm{x})\geq \alpha(h(x),\norm{x})$ for all
  $x\in \Cc\backslash(\Tc\cup\bar{\Sc})$.
  Hence, by~\cite[Lemma 5.2]{PM-JC:23-csl}, for all
  $x\in\Cc\backslash{(\Tc\cup\bar{\Sc})}$ there exists $u\in\real^{m}$
  satisfying
  \begin{align*}
    L_aV(x)+L_gV(x)u+W(x)\leq0,
    \\
    L_ah(x)+L_gh(x)u+\hat{\alpha}(h(x),\norm{x})\geq0,
  \end{align*}
  and hence $V$ and $h$ are compatible in all of $\Cc$.
\end{proof}

The conditions in Proposition~\ref{prop:compatible-clf-ecbf} are only
sufficient. In other words, there could exist weaker conditions
ensuring the existence of a compatible CLF-eCBF pair.

\begin{remark}\longthmtitle{Origin at the boundary of safe
    set}\label{rem:origin-boundary-safe-set} 
  \rm{The treatment above relies on the assumption that the origin
    belongs to $\text{Int}(\Cc)$. The extension of our results to the
    case when the origin is instead at $\partial\Cc$ remains an open
    problem.
    In fact, 
    establishing whether in such case there exists a CLBF of
    $\real^n\backslash\Cc$, or a (strictly) compatible CLF-CBF pair in
    $\Cc$ requires facing additional technical challenges.  For
    example, the construction of the CLBF in the proof of
    Theorem~\ref{thm:converse-safe-stab} relies on the fact that
    $\mathbf{0}_n$ does not belong to the set $\Tc$ and belongs to the
    set $\Pi$. Otherwise, the CLBF $\bar{V}$ (as defined therein) does
    not satisfy condition~\eqref{eq:clbf-negative-derivative} at the
    origin. } \demo
\end{remark}


\section{Conclusions}\label{sec:conclusions}

We have provided converse theorems on the existence of CBFs for the
study of safety and safe stabilization of control systems.  Regarding
safety, we have shown that for unbounded safe sets not all candidate
CBFs are CBFs, in contrast to what happens for bounded safe sets.
Next, we have derived a general set of conditions under which a CBF is
guaranteed to exist for any given safe set.  We have also extended the
definition of CBF conveniently to introduce eCBFs, and we have shown
that any safe set admits an eCBF.  Regarding safe stabilization, we
have established an alternate set of conditions under which a CLBF, a
(strictly) compatible CLF-CBF pair, and compatible CLF-eCBF pairs can
or can not exist.
Finally, we have shown via a counterexample that the existence of a
CLF and a CBF does not imply in general the existence of a strictly
compatible CLF-CBF pair, but we have found sufficient conditions under
which this holds.  Future work will focus on tightening the conditions
identified in our results and in extending the results to nonsmooth
barrier functions and discontinuous controlled dynamics.


\section*{Acknowledgments}
This work was partially supported by AFOSR Award FA9550-23-1-0740 and
ARO Award W911NF-23-1-0138. The authors wish to thank the anonymous
reviewers for their useful feedback and suggesting the condition in
Theorem~\ref{thm:converse-cbf}(i).

\bibliography{../bib/alias,../bib/JC,../bib/Main-add,../bib/Main}
\bibliographystyle{IEEEtran}

\begin{IEEEbiography}[{\includegraphics[width=1in,height=1.25in,clip,keepaspectratio]{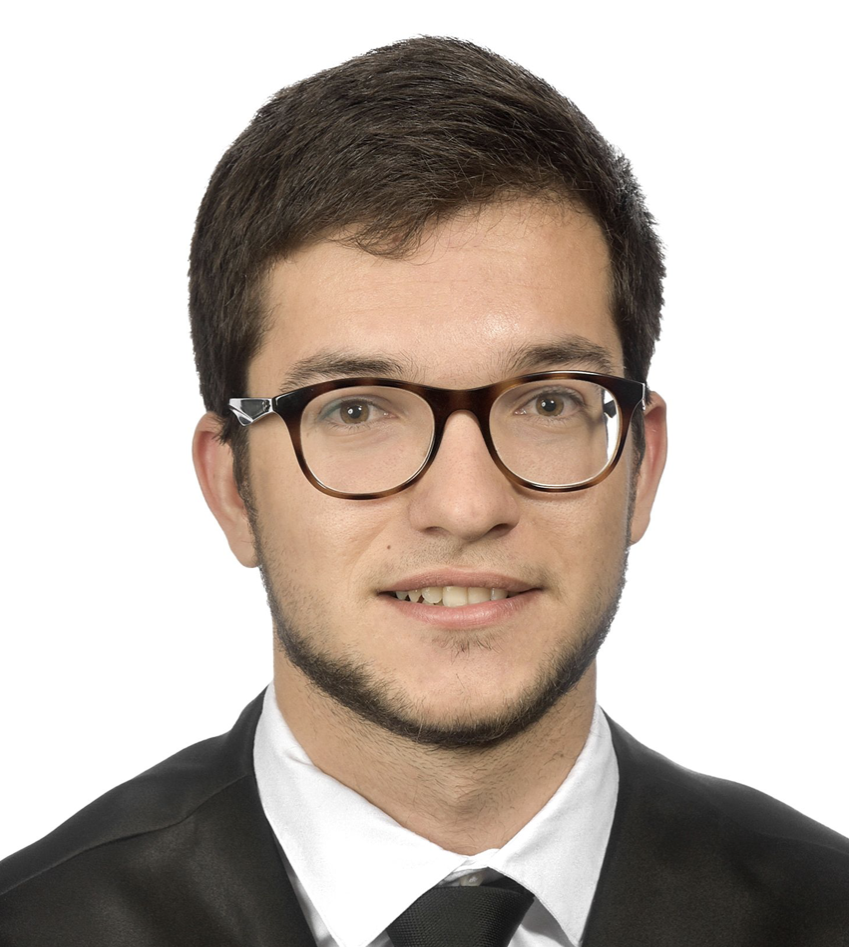}}]{Pol
    Mestres} received the Bachelor's degree in mathematics and the
  Bachelor's degree in engineering physics from the Universitat
  Polit\`{e}cnica de Catalunya, Barcelona, Spain, in 2020, and the
  Master's degree in mechanical engineering in 2021 from the
  University of California, San Diego, La Jolla, CA, USA, where he is
  currently a Ph.D candidate. His research interests include
  safety-critical control, optimization-based controllers, distributed
  optimization and motion planning.
\end{IEEEbiography}

\begin{IEEEbiography}[{\includegraphics[width=1in,height=1.25in,clip,keepaspectratio]{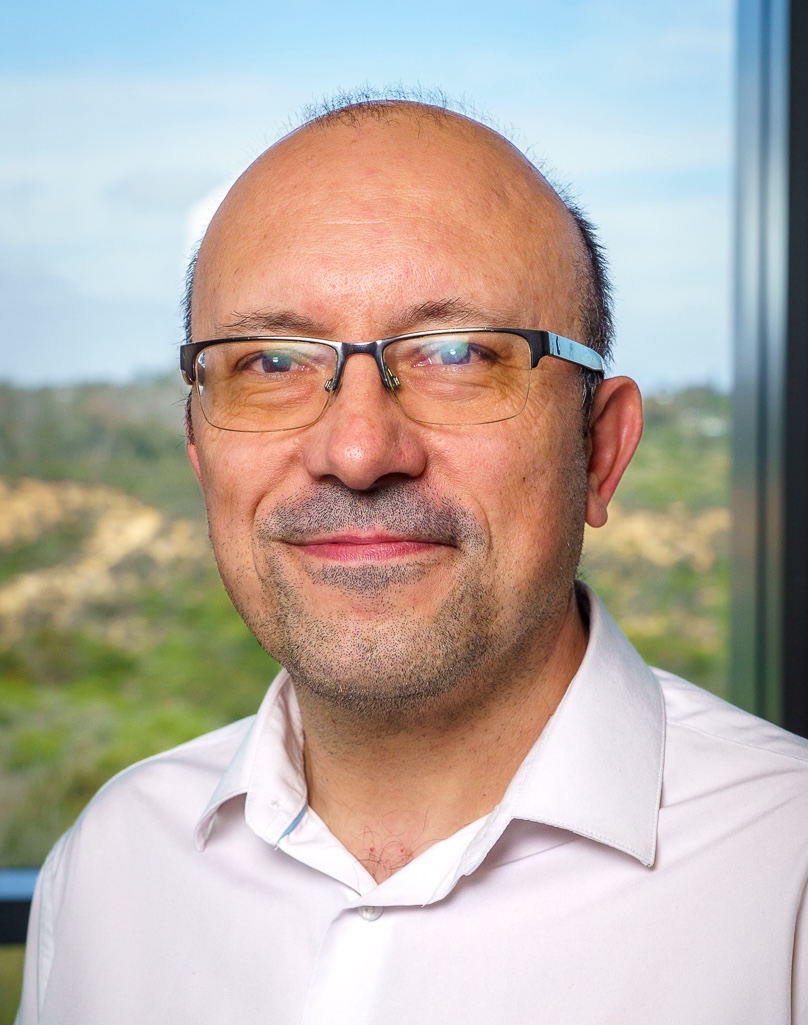}}]{Jorge
    Cort\'{e}s}(M'02, SM'06, F'14) received the Licenciatura degree in
  mathematics from Universidad de Zaragoza, Zaragoza, Spain, in 1997,
  and the Ph.D. degree in engineering mathematics from Universidad
  Carlos III de Madrid, Madrid, Spain, in 2001. He held postdoctoral
  positions with the University of Twente, Twente, The Netherlands,
  and the University of Illinois at Urbana-Champaign, Urbana, IL,
  USA. He was an Assistant Professor with the Department of Applied
  Mathematics and Statistics, University of California, Santa Cruz,
  CA, USA, from 2004 to 2007. He is a Professor and Cymer Corporation
  Endowed Chair in High Performance Dynamic Systems Modeling and
  Control at the Department of Mechanical and Aerospace Engineering,
  University of California, San Diego, CA, USA.  He is a Fellow of
  IEEE, SIAM, and IFAC.  His research interests include distributed
  control and optimization, network science, nonsmooth analysis,
  reasoning and decision making under uncertainty, network
  neuroscience, and multi-agent coordination in robotic, power, and
  transportation networks.
\end{IEEEbiography}

\end{document}